\newcommand\redout{\bgroup\markoverwith
{\textcolor{red}{\rule[.5ex]{2pt}{0.4pt}}}\ULon}
\title[Infinitely generated symbolic Rees algebras]
{Infinitely generated symbolic Rees algebras over finite fields} 
\author{Akiyoshi Sannai and Hiromu Tanaka} 
\subjclass[2010]{13A30, 14E30.}
\keywords{symbolic Rees algebras, Mori dream spaces, Cowsik's question}
\address{Center for Advanced Intelligence Project RIKEN 1-4-1, Nihonbashi, Chuo, Tokyo 103-0027, Japan.}
\email{akiyoshi.sannai@riken.jp}
\address{Graduate School of Mathematical Sciences, 
the University of Tokyo, 3-8-1 Komaba, Meguro-ku, Tokyo 153-8914, Japan.}
\email{tanaka@ms.u-tokyo.ac.jp}
\newcommand{\sat}[0]{{\operatorname{sat}}}
\newcommand{\Proj}[0]{{\operatorname{Proj}}}
\newcommand{\Spec}[0]{{\operatorname{Spec}}}
\newcommand{\Supp}[0]{{\operatorname{Supp}}}
\newcommand{\Ex}[0]{{\operatorname{Ex}}}
\newtheorem{thm}{Theorem}[section]
\newtheorem{lem}[thm]{Lemma}
\newtheorem{prop}[thm]{Proposition}
\newtheorem{claim}[thm]{Claim}
\theoremstyle{definition}
\newtheorem{ques}[thm]{Question}
\newtheorem{conje}[thm]{Conjecture}
\newtheorem{dfn}[thm]{Definition}
\newtheorem{rem}[thm]{Remark}
\newtheorem*{ack}{Acknowledgements}      
\newtheorem{nota}[thm]{Notation}
\newcommand{\p}{\mathfrak{p}}
\newcommand{\q}{\mathfrak{q}}
\newcommand{\m}{\mathfrak{m}}
\newcommand{\Q}{\mathbb{Q}}
\newcommand{\Z}{\mathbb{Z}}
\newcommand{\F}{\mathbb{F}}
\newcommand{\MO}{\mathcal{O}}
\begin{document}

\maketitle

\begin{abstract}
For the polynomial ring over an arbitrary field with twelve variables, 
there exists a prime ideal whose symbolic Rees algebra is not finitely generated. 
\end{abstract}

\tableofcontents

\section{Introduction}
Let $A$ be a polynomial ring over a field $k$ with finitely many variables. 
For a field $L$ satisfying $k\subset L \subset {\rm Frac}\,(A)$, 
Hilbert's fourteenth problem asks whether or not the ring $L\cap A$ is finitely generated over $k$. 
In 1958, Nagata found the first counterexample to this problem 
over arbitrary sufficiently large fields \cite{Nag58}. 
For more examples, we refer to \cite{Rob90'}, \cite{Kur05} and \cite{Tot08}. 
On the other hand, this problem is related to the following question raised by Cowsik \cite{Cow85}.

\begin{ques}\label{q-Cowsik}
Let $A$ be a polynomial ring over a field with finitely many variables and let $P$ be a prime ideal of $A$. 
Set $P^{(m)}:=P^mA_P \cap A$. 
Then is the symbolic Rees algebra $R_S(P):=\bigoplus_{m=0}^{\infty} P^{(m)}$ a finitely generated $k$-algebra? 
\end{ques}

Indeed, Roberts settled Question \ref{q-Cowsik} negatively \cite{Rob85}, 
using Nagata's counterexample mentioned above. 
Roberts's construction is valid only over sufficiently large fields of characteristic zero, 
although Nagata's example is independent of the characteristic of the base field. 
This is because Roberts's proof requires a theorem of Bertini type that fails in positive characteristic (cf. \cite[line 7 in page 591]{Rob85}). 
On the other hand, it is known for experts that Roberts's method works, after suitable modifications, 
also for the case where $k$ is not algebraic over a finite field. 
Roughly speaking, counterexamples over such fields can be found after 
replacing the theorem of Bertini type and Nagata's counterexample used in \cite{Rob85} 
by \cite[Theorem 2.1]{DH91} and the blowup of $\mathbb P^2$ along general nine points, respectively. 
In this sense, Question \ref{q-Cowsik} is still open if $k$ is algebraic over a finite field.

The purpose of this paper is to give the negative answer to Question \ref{q-Cowsik} over an arbitrary base field. 
More specifically, the main theorem is as follows.

\begin{thm}[cf. Theorem \ref{t-main2}]\label{intro-t-main}
Let $k$ be a field. 
Let $A$ be the polynomial ring over $k$ with twelve variables. 
Then there exists a prime ideal $\p$ of $A$ 
whose symbolic Rees algebra $\bigoplus_{m=0}^{\infty} \p^{(m)}$ is not a noetherian ring. 
\end{thm}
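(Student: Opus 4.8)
The plan is to adapt Roberts's strategy to positive characteristic by finding a geometric source of an infinitely generated symbolic Rees algebra that survives over finite fields. Roberts's counterexample ultimately rests on a surface (the blowup of $\mathbb{P}^2$ at general points, in the modified version) carrying a big divisor with no finitely generated section ring, together with a Bertini-type argument used to pass from the surface to a prime ideal in a polynomial ring. Since the Bertini theorem of \cite{Rob85} fails over $\overline{\mathbb{F}_p}$, I would replace that input entirely. The key geometric phenomenon I would exploit is the existence, over any algebraically closed field, of a smooth projective surface $S$ equipped with a nef and big (or at least big) line bundle $L$ such that the section ring $\bigoplus_{m\ge 0} H^0(S, mL)$ is not finitely generated; over $\overline{\mathbb{F}_p}$ such examples are known to come from Mori dream space failures, e.g.\ blowups of $\mathbb{P}^2$ at points lying on a cubic, where a curve $C$ with $C^2<0$ but $C\cdot C'\ge 0$ behavior forces infinitely many extremal rays or an infinite-dimensional Cox ring graded piece.

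The concrete construction I would carry out proceeds in the following steps. First, I would produce a two-dimensional normal graded ring $R$ (the section ring or Cox-type ring of the chosen surface) that is not noetherian, realized so that $R = \bigoplus_{m\ge 0} I_m$ is identified with a symbolic power filtration. The twelve variables suggest embedding this via a space monomial curve or, more likely, via a cone construction: I would take the projective surface $S$ with its non-finitely-generated section ring and form an affine cone, then realize the vertex's defining ideal or the ideal of a suitable subvariety as the prime ideal $\p$. Second, I would verify that the $A$-ideal so obtained is genuinely prime and that its symbolic powers $\p^{(m)}$ coincide with the graded pieces of the non-noetherian ring, so that $\bigoplus_m \p^{(m)}$ inherits the infinite generation. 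This matching of symbolic powers with geometrically defined ideals is the technical heart: symbolic powers of $\p$ correspond to sections vanishing to order $m$ along the subvariety, and I must show these equal the expected $H^0$ groups with no embedded-primary discrepancy. Third, I would count variables and confirm that the construction fits into a polynomial ring with exactly twelve variables, and that the argument is insensitive to whether $k$ is finite, $\overline{\mathbb{F}_p}$, or of characteristic zero.

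The main obstacle will be the second step — establishing that the symbolic Rees algebra is non-noetherian over a \emph{finite} or \emph{algebraically-closed-over-finite} field, precisely where Roberts's method breaks down. Over large fields one invokes Bertini to choose general hyperplane sections keeping things prime and reduced; over $\overline{\mathbb{F}_p}$ there may be no general member defined over the field, so I expect to need an explicit or characteristic-free replacement. The natural substitute is to use a surface whose failure of finite generation is itself of a characteristic-robust nature, such as one arising from an elliptic curve $E$ of infinite order translation or from configurations where the negative curve / infinite effective cone persists in every characteristic; alternatively, one exhibits the non-noetherianity directly by producing an explicit infinite sequence of symbolic powers requiring new generators in unboundedly large degrees, bypassing Bertini altogether. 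I anticipate that proving this explicit unboundedness — rather than deducing it from a slick geometric finite-generation failure — will consume the bulk of the real work, and that the twelve-variable bound is dictated by the dimension of the ambient space needed to carry the chosen surface plus the cone and the auxiliary coordinates introduced to force primality.
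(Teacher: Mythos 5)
Your proposal contains a genuine gap, and it sits exactly at the point you yourself identify as the main obstacle. The geometric sources you propose to make ``characteristic-robust'' --- an elliptic curve with a non-torsion degree-zero divisor, or blowups of $\mathbb P^2$ at points on a cubic --- fail precisely over $\overline{\F}_p$, which is the only open case: every degree-zero divisor class on a smooth curve over $\overline{\F}_p$ is torsion (all points of the Jacobian over $\overline{\F}_p$ are torsion), so the non-semi-ample nef divisor you want evaporates, and indeed the positive-characteristic analogues of the examples of \cite{GNW94} and \cite{Rob90'} are known to be \emph{finitely generated} by \cite{Cut91} and \cite{Kur93}, \cite{Kur94}. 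Hoping that ``the negative curve / infinite effective cone persists in every characteristic'' is the wrong bet; no such persistence is known, and the paper instead imports a genuinely nontrivial existence theorem of Totaro \cite[Theorem 6.1]{Tot09}: a smooth projective surface $V$ over $\overline{\F}_p$ carrying a nef Cartier divisor $M$ that is not semi-ample (Lemma \ref{l-totaro}). Without this input, or something equally strong, your plan has no valid starting point over fields algebraic over a finite field, and your fallback --- exhibiting ``explicit unboundedness'' of generators directly --- is a hope rather than an argument.

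The passage from the surface to the prime ideal is also different from what you sketch, and your cone construction is underspecified where it matters. The paper does not cone off $V$; it embeds $V\subset \mathbb P^{11}_k$ and uses Poonen's Bertini theorem over finite fields \cite[Theorem 1.1]{Poo04} --- the second essential tool absent from your proposal --- to choose a smooth connected curve $W\sim stH_V-tM$ with $t\gg s\gg 0$, taking $\p$ to be the homogeneous ideal of $W$ in $k[x_0,\dots,x_{11}]$ (twelve variables because $W$ lives in $\mathbb P^{11}$; over algebraically closed $k$ six variables suffice, and $\mathbb P^{11}$ is needed only to descend Totaro's surface from a finite extension of the prime field via a Segre embedding, Lemma \ref{l-totaro}(2)). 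Non-noetherianity is then deduced not from the section ring of a big divisor but from Mori dream space theory: by Proposition \ref{p-double-gr}, $\bigoplus_m \p^{(m)}$ is noetherian iff the Cox ring of the blowup $X\to\mathbb P^{11}_k$ along $W$ is, and a noetherian Cox ring would force every nef divisor on $X$ to be semi-ample (Lemma \ref{l-MDS}, after \cite{Oka16}), contradicting the explicit nef non-semi-ample divisor $L=(st-r)f^*H+S'$ restricting to $tg^*M$ on the strict transform of $V$ (Proposition \ref{p-not-sa}). Your second step --- identifying symbolic powers with sections vanishing along the subvariety, with no embedded-primary discrepancy --- is correct in spirit and matches Lemma \ref{l-double-gr}, but that is the routine part; the two ideas that actually carry the theorem, Totaro's surface and Poonen's Bertini, are missing.
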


\subsection{Sketch of the proof}

We overview some of the ideas used in the proof of Theorem \ref{intro-t-main}. 
Let us treat the case where $k=\F_p$. 
Our method is based on a geometric description of symbolic Rees algebras  
that was pointed out by Cutkosky in a certain special case \cite{Cut91}. 
We start with a projective smooth surface $V$ over $\F_p$, constructed by Totaro, 
that has a nef divisor $M$ which is not semi-ample. 
We embed $V$ into the eleven-dimensional projective space $\mathbb P^{11}_{\F_p}$ 
(cf. Lemma \ref{l-totaro}). 
Thanks to a theorem of Bertini type over finite fields, 
we can find a smooth curve $W$ on $V$ that is linearly equivalent to $stH|_V-tM$ 
for a hyperplane divisor $H$ of $\mathbb P^{11}_{\F_p}$ under the assumption that $t\gg s\gg 0$. 
Take a homogeneous prime ideal $\p$ on $A=\F_p[x_0, \cdots, x_{11}]$ that defines $W$. 
Let $f:X \to \mathbb P^{11}_{\F_p}$ be the blowup along $W$. 
Set $D:=f^*H$ and let $E$ be the $f$-exceptional prime divisor on $X$. 
Then $\bigoplus_{m=0}^{\infty} \p^{(m)}$ is not a noetherian ring if and only if 
the Cox ring of $X$ is not a noetherian ring (cf. Proposition \ref{p-double-gr}). 
In particular it suffices to find a nef divisor on $X$ that is not semi-ample. 
By choosing $s$ and $t$ carefully, we can find such a divisor (cf. Proposition \ref{p-not-sa}(3)).
For more details, see Section \ref{s-main-thm}.

\subsection{Related topics}

It is worth mentioning that, concerning Question \ref{q-Cowsik},  
many authors have studied the case where $P$ is the prime ideal of $k[x, y, z]$ 
that 
defines a space monomial curve $(t^a, t^b, t^c)$ in $\mathbb A^3_k$. 
For instance, Goto, Nishida and Watanabe proved that for some triples $(a, b, c)$, 
the associated symbolic Rees algebras are not finitely generated if $k$ is of characteristic zero \cite{GNW94}. 
It is remarkable that this result is applied to study the compactified moduli space 
 $\overline{\mathcal M}_{0, n}$ of pointed rational curves. 
More specifically, it turns out that 
$\overline{\mathcal M}_{0, n}$ is not a Mori dream space if $n \geq 13$ 
and the base field is of characteristic zero \cite{Cas09}, \cite{GK16}. 

Since the case of characteristic zero has such an application, 
it is natural to consider also the case of positive characteristic.
However the situation seems to be subtler. 
Indeed, 
if the base field is of positive characteristic, 
then it is known that the analogous rings of 
the examples given in \cite{GNW94} and \cite{Rob90'} 
are shown to be finitely generated by \cite{Cut91} \cite{GNW94} and \cite{Kur93} \cite{Kur94}, respectively. 
Then Goto and Watanabe made the following conjecture, which still remains to be an open problem. 

\begin{conje}\label{GW}
Let $R$ be the polynomial ring over a field $k$ with three valuables. 
Let $P$ be the prime ideal that defines 
a space monomial curve $(t^a, t^b, t^c)$ in $\mathbb A^3_k$. 
If the characteristic of $k$ is positive, 
then the symbolic Rees ring $R_S(P)=\bigoplus_{m=0}^{\infty} P^{(m)}$ is finitely generated. 
\end{conje}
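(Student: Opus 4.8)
The plan is to recast finite generation as a semi-ampleness statement on a rational surface and then to exploit the special feature of positive characteristic that over $\overline{\F}_p$ numerically trivial line bundles are torsion. First I would reduce to $k=\overline{\F}_p$: the defining ideal $P$ of $(t^a,t^b,t^c)$ is a binomial ideal defined over the prime field $\F_p$, the quotient $A/P\cong k[t^a,t^b,t^c]$ is geometrically integral, and symbolic powers commute with the flat base change $\F_p\hookrightarrow k$; hence $R_S(P)$ is finitely generated over $k$ if and only if it is so over $\overline{\F}_p$. Next, following Cutkosky's geometric description \cite{Cut91}, I would pass to the weighted projective plane $\mathbb P(a,b,c)=\Proj k[x,y,z]$, $\deg x=a$, $\deg y=b$, $\deg z=c$, and let $f\colon X\to \mathbb P(a,b,c)$ be the blowup at the smooth point corresponding to $P$, with exceptional curve $E$ and $A:=f^*\MO(1)$. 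In these terms finite generation of $R_S(P)$ is equivalent to semi-ampleness of the nef $\Q$-divisor $D:=A-\lambda E$, where $\lambda$ is the Seshadri constant of $A$ at the blown-up point (the nef threshold of $A-tE$). Since $X$ has Picard rank two, its nef cone is two-dimensional and $D$ generates one of its two extremal rays, so everything hinges on this ray.

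The positive-characteristic input enters through Keel's theorem on nef and big line bundles. Suppose the nef cone is capped by an irreducible negative curve $C\neq E$, that is, $D\cdot C=0$ for some $C$ with $C^2<0$; then $\lambda\in\Q$ and $D^2>0$, so $D$ is nef and big. Keel's theorem reduces semi-ampleness of $D$ to semi-ampleness of $D|_{\Ex(D)}$, and $\Ex(D)$ is the one-dimensional locus swept out by $C$, on which $D$ has degree zero. Thus $D|_{\Ex(D)}$ is a numerically trivial line bundle on a projective curve over $\overline{\F}_p$; since $\Pic^0$ of such a curve is torsion, a positive multiple of $D|_{\Ex(D)}$ is trivial and hence base point free, so by Keel's theorem $D$ is semi-ample and $R_S(P)$ is finitely generated. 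This torsion mechanism is exactly what breaks down over large fields, where $\Pic^0$ carries the non-torsion classes exploited by Nagata and Totaro, and it is why the characteristic-zero answer to Question \ref{q-Cowsik} is negative.

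The hard part will be to show that such a capping negative curve always exists, equivalently that the Seshadri constant $\lambda$ is rational for every triple $(a,b,c)$. The competing possibility is $\lambda=\sqrt{A^2/(-E^2)}$ with $D^2=0$: then $D$ is nef but not big, Keel's theorem no longer applies, and $D$ can fail to be semi-ample --- indeed the present paper uses, over a finite field, a nef divisor with self-intersection zero that is not semi-ample, so the torsion argument cannot be invoked blindly in the non-big case. The whole conjecture therefore reduces to proving that the very special surface $X$ --- a single blowup of a toric surface --- never produces an irrational Seshadri ray over $\overline{\F}_p$. I would attack this by producing the required curve $C$ through the abundant self-maps of $X$ (the Frobenius together with the toric multiplication maps on $\mathbb P(a,b,c)$), using torsion in $\Pic^0$ of an auxiliary curve to manufacture an effective divisor realizing the nef threshold, much as in \cite{Cut91}. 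Controlling this construction uniformly over all $(a,b,c)$ is precisely the step that has so far resisted proof, and is the reason Conjecture \ref{GW} remains open.
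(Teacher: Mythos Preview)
The statement you are attempting to prove is labelled \emph{Conjecture} in the paper, and the surrounding text says explicitly that it ``still remains to be an open problem.'' There is no proof in the paper to compare against: the paper neither proves Conjecture~\ref{GW} nor claims to. On the contrary, the paper's main theorem goes in the opposite direction, showing that in higher-dimensional polynomial rings over $\overline{\F}_p$ there \emph{do} exist primes with non-noetherian symbolic Rees algebra, and the authors remark that if Conjecture~\ref{GW} is true then its proof must exploit something specific to the three-variable situation.

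Your proposal is not a proof either, and you say so yourself in the final paragraph. The outline you give --- reduction to $\overline{\F}_p$, Cutkosky's identification with a Cox-type ring on the blowup of $\mathbb P(a,b,c)$ at a point, and Keel's theorem combined with torsion of $\Pic^0$ over $\overline{\F}_p$ --- is the well-known partial approach, and it does settle the case where the nef boundary is cut out by a negative curve. But the entire content of the conjecture is the remaining case $D^2=0$, i.e.\ rationality of the Seshadri constant for every $(a,b,c)$, and your last paragraph correctly identifies this as the missing step and offers no argument for it beyond a hope involving Frobenius and toric self-maps. So what you have written is a survey of the known reduction together with an honest statement of the gap, not a proof proposal in any meaningful sense. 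If you intend this as a proof, the gap is fatal; if you intend it as commentary on the conjecture, it is accurate but should not be labelled a proof.
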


It is known that Conjecture \ref{GW} is reduced to the case where $k=\overline{\F}_p$. 
On the other hand, Theorem \ref{intro-t-main} indicates that 
a symbolic Rees algebra is not necessarily finitely generated in a higher dimensional case, 
even if the base field is $\overline{\F}_p$. 
Thus if the Conjecture \ref{GW} holds true, 
then its proof depends on some facts that hold only in a lower dimensional situation.

\begin{ack}
The authors would like to thank Professors Kazuhiko Kurano and Shinnosuke Okawa 
for several useful comments and discussion. 
We are grateful to the referee for valuable comments. 
The first author would like to thank Professor Shigeru Mukai for his warm 
encouragement and stimulating discussions. The first author was partially supported
by JSPS Grant-in-Aid (S) No 25220701 and JSPS Grant-in-Aid for Young Scientists (B) 16K17581.
The second author was funded by EPSRC. 
\end{ack}

\section{Preliminaries}

\subsection{Notation} 

In this subsection, we summarise notation used in the paper.

We say that $X$ is a {\em variety} over a field $k$ (or a $k$-{\em variety}) 
if $X$ is an integral scheme which is separated and of finite type over $k$. 
We say that $X$ is a {\em curve} over $k$ or a $k$-{\em curve}  
(resp. a {\em surface} over $k$ or a $k$-{\em surface})   
if $X$ is a variety over $k$ with $\dim X=1$ (resp. $\dim X=2$).

Given an invertible sheaf $L$ on a proper scheme $X$ over a field $k$, 
consider the natural homomorphism: 
\begin{equation}\label{e-nota1}
H^0(X, L) \otimes_k \MO_X \to L.
\end{equation}
\begin{enumerate}
\item 
We say that $L$ is {\em nef} if $L \cdot C \geq 0$ for any $k$-curve $C$ on $X$. 
\item 
For a $k$-linear subspace $V$ of $H^0(X, L)$, 
the {\em scheme-theoretic base locus} $B(V)$ of $V$ is the closed subscheme of $X$ 
defined by the image of the composite homomorphism
$$V \otimes_k L^{-1} \hookrightarrow H^0(X, L) \otimes_k L^{-1} \to \MO_X,$$
where the latter one is induced by (\ref{e-nota1}). 
For the linear system $\Lambda$ corresponding to $V$, 
we set $B(\Lambda):=B(V)$. 
\item 
We say that $L$ is {\em globally generated} if 
(\ref{e-nota1}) is surjective, i.e. $B(|L|)=\emptyset$. 
\item 
We say that $L$ is {\em semi-ample} if 
there exists a positive integer $n$ such that $L^{\otimes n}$ is globally generated. 
\end{enumerate}
For a $\Q$-Cartier $\Q$-divisor $D$ on a normal proper variety $X$ over a field, 
we say that $D$ is {\em nef} (resp. {\em semi-ample}) if there exists a positive integer $n$ 
such that $nD$ is a Cartier divisor and $\MO_X(nD)$ is nef (resp. semi-ample).

\subsection{Cox rings}
In this subsection, 
we recall definition of Cox rings (Definition \ref{d-Cox}) and a basic property (Lemma \ref{l-MDS}).

\begin{dfn}\label{d-multi-sec}
Let $k$ be a field. 
Let $X$ be a normal variety over $k$. 
For a subsemigroup $\Gamma$ of the group ${\rm WDiv}\,(X)$ of Weil divisors,  
we set 
$$R(X, \Gamma):=\bigoplus_{D \in \Gamma} H^0(X, \MO_X(D)),$$
which is called {\em the multi-section ring} of $\Gamma$. 
\end{dfn}

\begin{dfn}\label{d-Cox}
Let $k$ be a field. 
Let $X$ be a proper normal variety over $k$ 
whose divisor class group ${\rm Cl}(X)$ is a finitely generated free abelian group. 
Fix a subgroup $\Gamma$ of the group ${\rm WDiv}\,(X)$ of Weil divisors such that 
the induced group homomorphism $\Gamma \to {\rm Cl}(X)$ is bijective. 
We set 
$${\rm Cox}\,(X):=R(X, \Gamma)=\bigoplus_{D \in \Gamma} H^0(X, \MO_X(D)),$$
which is called the {\em Cox ring} of $X$. 
\end{dfn}

\begin{rem}\label{r-Cox}
If we take another subgroup $\Gamma'$ satisfying the same property as $\Gamma$, 
then it is known that $R(X, \Gamma)$ and $R(X, \Gamma')$ 
are isomorphic as $k$-algebras (cf. \cite[Remark 2.17]{GOST15}). 
\end{rem}

\begin{lem}\label{l-MDS}
Let $k$ be a field. 
Let $X$ be a projective normal $\Q$-factorial variety over $k$ 
whose divisor class group ${\rm Cl}(X)$ is a finitely generated free abelian group. 
Assume that 
\begin{enumerate}
\item[(a)] $X$ is geometrically integral over $k$, 
\item[(b)] $X$ is geometrically normal over $k$, 
\item[(d)] ${\rm Cox}\,(X)$ is a noetherian ring, and 
\item[(c)] ${\bf Pic}^0_X$ has dimension zero, 
where ${\bf Pic}^0_X$ denotes the identity component of the Picard scheme of $X$ over $k$ 
(cf. \cite[Remark 2.4]{Oka16}). 
\end{enumerate}
Then, the following assertions hold. 
\begin{enumerate}
\item 
For any finitely generated subsemigroup $\Gamma_1$ of ${\rm WDiv}\,(X)$, 
the multi-section ring $R(X, \Gamma_1)$ of $\Gamma_1$ is a finitely generated $k$-algebra.
\item An arbitrary nef Cartier divisor $L$ on $X$ is semi-ample. 
\end{enumerate}
\end{lem}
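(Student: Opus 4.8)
The plan is to let the finite generation of the Cox ring be the single engine behind both assertions. First I would upgrade hypothesis (d) from ``noetherian'' to the statement that ${\rm Cox}(X)$ is a \emph{finitely generated} $k$-algebra. Since $X$ is proper and geometrically integral by (a), the degree-zero piece of ${\rm Cox}(X)$ is $H^0(X,\MO_X)=k$; and since $X$ is projective, every class $c$ with $H^0(X,\MO_X(c))\neq 0$ is effective, so the support of the ${\rm Cl}(X)$-grading lies in the closed pseudo-effective cone of ${\rm Cl}(X)\otimes_{\Z}\R$, which is strongly convex. Choosing an integral linear functional strictly positive on that cone refines the ${\rm Cl}(X)$-grading into an $\mathbb{N}$-grading whose degree-zero part is exactly $H^0(X,\MO_X)=k$; a noetherian $\mathbb{N}$-graded ring with field degree-zero part is finitely generated over it, so ${\rm Cox}(X)$ is a finitely generated $k$-algebra.

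For (1), fix a finitely generated subsemigroup $\Gamma_1\subset{\rm WDiv}(X)$ and consider the class homomorphism $\pi\colon\Gamma_1\to{\rm Cl}(X)$. For $D\in\Gamma_1$ with class $c=\pi(D)$, writing $D=D_c+(g_D)$ for the chosen representative $D_c\in\Gamma$ identifies $H^0(X,\MO_X(D))$ with $g_D^{-1}H^0(X,\MO_X(D_c))$ inside the function field $k(X)$. Hence $R(X,\Gamma_1)$ is isomorphic, up to a harmless twist by units coming from the cocycle $(g_D)$, to the pullback $\bigoplus_{D\in\Gamma_1}{\rm Cox}(X)_{\pi(D)}$ of the finitely generated multigraded algebra ${\rm Cox}(X)$ along the monoid homomorphism $\pi$. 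Because $\Gamma_1$ is finitely generated, this pullback is again a finitely generated $k$-algebra: it is assembled from the Veronese-type subalgebra $\bigoplus_{c\in\pi(\Gamma_1)}{\rm Cox}(X)_c$, finitely generated since $\pi(\Gamma_1)$ is a finitely generated submonoid of the grading group, together with a Laurent-polynomial contribution from the finitely generated (free) kernel of $\pi$, and neither operation destroys finite generation.

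For (2), let $L$ be a nef Cartier divisor. Taking $\Gamma_1=\mathbb{N}L$ in (1) shows $R(X,L)=\bigoplus_{m\ge 0}H^0(X,\MO_X(mL))$ is finitely generated, which already yields a rational map from $X$ to $\Proj R(X,L)$; the genuine content is to promote nefness to basepoint-freeness of some multiple $mL$. Here I would pass to the algebraic closure: by (a) and (b) the base change $X_{\bar k}$ is integral and normal, and condition (c) is exactly what guarantees that ${\rm Cl}(X_{\bar k})$ remains finitely generated and free and that ${\rm Cox}(X_{\bar k})$ stays finitely generated under the faithfully flat base change, so that $X_{\bar k}$ is a Mori dream space in the classical sense. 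By the Hu--Keel structure theory the nef cone of $X_{\bar k}$ is rational polyhedral and generated by semi-ample classes, whence the nef divisor $L_{\bar k}$ is semi-ample; global generation of a multiple of $L_{\bar k}$ then descends to global generation of the same multiple of $L$, because a base locus is empty if and only if it is empty after faithfully flat base change.

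The main obstacle is the implication nef $\Rightarrow$ semi-ample in (2): unlike (1), this is not a formal consequence of the finite generation of one section ring but the substantive Mori-dream-space input. Within it, the two delicate points are the behaviour of ${\rm Cox}(X)$ and of ${\rm Cl}(X)$ under base change to $\bar k$ --- precisely what hypothesis (c) on ${\bf Pic}^0_X$ is designed to control --- and the appeal to the chamber (variation-of-GIT) decomposition that makes the nef cone rational polyhedral with semi-ample generators. If one prefers to avoid base change altogether, these facts must instead be re-established directly over the possibly non-closed field $k$, following the framework of the cited work of Okawa.
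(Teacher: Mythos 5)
Your overall strategy --- reduce everything to the Mori dream space property of $X$ --- is the same as the paper's, and parts of your write-up are even more detailed than the original: the paper's proof is essentially a citation, applying \cite[Theorem 2.19]{Oka16} (whose hypotheses are exactly (a)--(d)) to conclude that $X$ is a Mori dream space \emph{in Okawa's sense over the possibly non-closed field $k$}, after which (2) is part of \cite[Definition 2.3(2)]{Oka16} and (1) follows from \cite[Lemma 2.20]{Oka16} together with the standard reduction of a finitely generated subsemigroup to a subgroup (cf.\ \cite[Remark 2.17]{GOST15}). Your opening upgrade of (d) from ``noetherian'' to ``finitely generated'' via a strictly positive integral functional on the effective part of the ${\rm Cl}(X)$-grading is correct and is a useful point the paper leaves implicit, and your treatment of (1) is a by-hand version of the same Veronese/semigroup reduction the paper delegates to \cite{GOST15} and \cite{Oka16}.

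The genuine gap is in your proof of (2), which is the substantive assertion. You base-change to $\overline{k}$ and claim that hypothesis (c) ``is exactly what guarantees'' that ${\rm Cl}(X_{\overline{k}})$ remains finitely generated and free and that ${\rm Cox}(X_{\overline{k}})$ stays finitely generated. Neither claim is justified, and as stated both can fail: the restriction map ${\rm Cl}(X) \to {\rm Cl}(X_{\overline{k}})$ need not be surjective (take $X$ a smooth quadric surface over $k$ with ${\rm Cl}(X) \simeq \Z$ but ${\rm Cl}(X_{\overline{k}}) \simeq \Z^2$; all of (a), (b), (c) hold), so ${\rm Cox}(X_{\overline{k}})$ is graded by a strictly larger group than ${\rm Cox}(X) \otimes_k \overline{k}$ and its finite generation is \emph{not} a consequence of faithfully flat base change; moreover ${\rm Cl}(X_{\overline{k}})$ may acquire torsion (forms of Enriques-type surfaces), and $\Q$-factoriality need not ascend from $k$ to $\overline{k}$ under the stated hypotheses, so you cannot even assert that $X_{\overline{k}}$ satisfies Hu--Keel's definition. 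Hypothesis (c) controls the identity component of the Picard scheme, i.e.\ it rules out positive-dimensional continuous families of line bundles; it does not control the Galois action on, or the growth of, the divisor class group. Bridging precisely this gap --- establishing the Mori dream property directly over $k$, via GIT for the diagonalizable group $\Spec\,k[{\rm Cl}(X)]$ acting on $\Spec\,{\rm Cox}(X)$, rather than by descent from $\overline{k}$ --- is the content of \cite[Theorem 2.19]{Oka16}, which is why the paper invokes it rather than Hu--Keel. Your closing sentence acknowledges this alternative, but in your argument as written the crucial step is assumed, not proved. (The final descent of global generation from $\overline{k}$ to $k$ via flat base change of $H^0$ is fine; it is the ascent of the Mori dream property that is missing.)
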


\begin{proof}
By (a) and (b), $X$ is a variety in the sense of 
\cite[the end of Section 1]{Oka16}. 
Then the conditions (c) and (d) enable us to apply \cite[Theorem 2.19]{Oka16}, 
hence $X$ is a Mori dream space in the sense of 
\cite[Definition 2.3]{Oka16}. 
Then (2) follow from \cite[Definition 2.3(2)]{Oka16}. 
Let us prove (1). By standard arguments (cf. \cite[discussion in Remark 2.17]{GOST15}), 
we may assume that $\Gamma_1$ is a subgroup of $\Gamma$ 
for some subgroup $\Gamma$ of ${\rm WDiv}\,(X)$. 
Then the assertion (2) holds by \cite[Lemma 2.20]{Oka16}. 
\end{proof}

\subsection{Symbolic Rees algebras}\label{ss-SRA}

The purpose of this subsection is to prove Proposition \ref{p-double-gr}, 
which gives a relation between symbolic Rees algebras of polynomial rings and 
Cox rings of blowups of projective spaces. 
The materials treated in this subsection might be well-known for experts, 
however we give the details of the proofs for the sake of completeness.

\begin{nota}\label{n-sra}
\begin{enumerate}
\item[(i)] 
Let $k$ be a field and 
let $A:=k[x_0, \cdots, x_n]$ be the polynomial ring equipped with 
the standard structure of a graded ring. 
Let $M$ be the homogenous maximal ideal of $A$. 
We have $\mathbb P^n_k=\Proj\,A$. 
\item[(ii)] 
Let $W$ be an integral closed subscheme of $\mathbb P^n_k$ and 
let $f:X \to \mathbb P^n_k$ be the blowup along $W$. 
For $D:=f^*\MO_{\mathbb P^n_k}(1)$ and the exceptional Cartier divisor $E$ 
that is the inverse image of $W$, we set 
$$R(X; D, -E):=\bigoplus_{d, e \in \Z_{\geq 0}} H^0(X, dD-eE).$$
\item[(iii)] 
There exists a homogeneous prime ideal $\p$ of $A:=k[x_0, \cdots, x_n]$ 
that induces the ideal sheaf on $\mathbb P^n_k$ corresponding to $W$. 
The {\em symbolic Rees algebra} of $\p$ is defined as  
$\bigoplus_{d=0}^{\infty}\p^{(d)},$ 
where $\p^{(d)}:= \p^d A_{\p} \cap A.$ 
\item[(iv)] 
Let $\mathcal I_W$ be the ideal sheaf on $\mathbb P^n_k$ 
corresponding to $W$. 
\end{enumerate}
\end{nota}

\begin{dfn}\label{d-saturation}
We use Notation \ref{n-sra}. 
For a homogenous ideal $I$ of $A$, we define the {\em saturation} $I^{\sat}$ of $I$ by 
$$I^{\sat}:=\bigcup_{\nu=1}^{\infty}\left\{ x \in A \,\middle|\, M^{\nu}x \subset I \right\}.$$
\end{dfn}

\begin{rem}\label{r-saturation}
We use the same notation as in Definition \ref{d-saturation}. 
By \cite[Excercise 5.10 in Ch. II]{Har77}, 
$I^{\sat}$ is a homogeneous ideal of $A$ such that 
both $I$ and $I^{\sat}$ define the same closed subscheme on $\mathbb P^n_k$ and 
the equation 
$$I^{\sat} = \bigoplus_{d=0}^{\infty} H^0(\mathbb P^n_k, \mathcal I(d))$$ 
holds, where $\mathcal{I}$ is the ideal sheaf on $\mathbb P^n_k$ associated with $I$. 
\end{rem}

\begin{dfn}\label{d-RR}
Let $R$ be a noetherian ring and let $J$ be an ideal of $R$. 
We define $\widetilde J$, called {\em the Ratliff--Rush ideal associated with} $J$, by 
$$\widetilde J:=\bigcup_{n=0}^{\infty}(J^{n+1}:J^n).$$ 
The ideal $J$ is said to be {\em Rattlif--Rush} if $J=\widetilde J$. 
It is well-known that $\widetilde{J}$ is a Ratliff--Rush ideal (cf. \cite[Introduction]{HLS92}). 
\end{dfn}

\begin{lem}\label{l-primary}
We use Notation \ref{n-sra}. 
Fix a positive integer $e$ 
and let $\p^e= \bigcap_{i=0}^r \q_i$ be a minimal primary decomposition of $\p^e$ 
such that $\sqrt{\q_0}=\p$ (cf. \cite[Section 4]{AM69}). 
Then the following hold. 
\begin{enumerate}
\item The equation $\p^{(e)}=\q_0$ holds. 
\item  The equation $(\p^e)^{\sat}= \bigcap_{i \in L} \q_i$ holds, 
where 
$$L:=\{i \in \{0, \cdots, r\}\,|\, \sqrt{\q_i} \neq M\}.$$ 
\end{enumerate}
\end{lem}

\begin{proof}
We show (1). 
Since $\p$ is a minimal prime ideal of $\p^e$, 
it follows from \cite[Proposition 4.9]{AM69} that $\p^eA_{\p}=\q_0A_{\p}$. 
In particular we get equations:  
$$\p^{(e)}=\p^eA_{\p} \cap A=\q_0A_{\p} \cap A=\q_0,$$
where the last equation follows from the fact that $\q_0$ is a $\p$-primary ideal. 
Thus (1) holds.

We show (2). 
First, let us prove $(\p^e)^{\sat} \subset \bigcap_{i \in L} \q_i$. 
Take $x \in (\p^e)^{\sat}$ and $i \in L$. 
By definition of the saturation $(\p^e)^{\sat}$ (cf. Definition \ref{d-saturation}), 
there is $\nu \in \Z_{>0}$ such that $M^{\nu}x \subset \p^e \subset \q_i$. 
As $\sqrt{\q_i} \neq M$, there is $y \in M \setminus \sqrt{\q_i}$. 
Hence $y^{\nu}x \in \q_i$. 
Since $\q_i$ is a primary ideal, 
it holds that $x \in\q_i$. 
Thus the inclusion $(\p^e)^{\sat} \subset \bigcap_{i \in L} \q_i$ holds. 

Second we prove the remaining inclusion: $(\p^e)^{\sat} \supset \bigcap_{i \in L} \q_i$. 
If $L=\{0, \cdots, r\}$, then there is nothing to show. 
We may assume that $L \neq \{0, \cdots, r\}$. 
As the primary decomposition $\p^e= \bigcap_{i=0}^r \q_i$ is minimal, 
there exists a unique index $i_1 \in \{1, \cdots, r\}$ such that $\sqrt{\q_{i_1}}=M$ (cf. \cite[Lemma 4.3]{AM69}). 
In particular, $L=\{0, \cdots, r\} \setminus \{i_1\}$.  
Since $A$ is a noetherian ring, 
there exists a positive integer $\nu$ such that $M^{\nu} \subset \q_{i_1}$. 
It follows from definition of the saturation $(\p^e)^{\sat}$ (cf. Definition \ref{d-saturation})
that $\bigcap_{i \in L} \q_i=\bigcap_{i \in \{0, \cdots, r\}, i \neq i_1} \q_i\subset (\p^e)^{\sat}$. 
\end{proof}

\begin{lem}\label{l-quasi-regular}
Let $R$ be a noetherian ring and 
let $I$ be an ideal of $R$ generated by a regular sequence $a_1, \cdots, a_{\mu}$ of $R$. 
Then the following hold. 
\begin{enumerate}
\item 
An $(R/I)$-algebra homomorphism
$$(R/I)[X_1,\dots, X_{\mu}] \to \bigoplus_{m=0}^{\infty} I^m/I^{m+1}, \quad X_i \mapsto a_i \mod I^2$$
is an isomorphism, where $I^0:=R$. 
\item 
If $I$ is a prime ideal of $R$ other than $\{0\}$, then $I^e$ is a Ratliff--Rush ideal for any positive integer $e$
 (cf. Definition \ref{d-RR}). 
\item 
If $I$ is a prime ideal of $R$, then for any positive integer $e$, 
an arbitrary associated prime ideal of $I^e$ is equal to $I$. 
\end{enumerate}
\end{lem}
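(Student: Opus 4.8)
The plan is to reduce parts (2) and (3) to part (1), which is the classical statement that a regular sequence is \emph{quasi-regular}: the associated graded ring $\operatorname{gr}_I(R):=\bigoplus_{m=0}^{\infty}I^m/I^{m+1}$ is a polynomial ring over $R/I$. The displayed map is manifestly a graded $(R/I)$-algebra homomorphism, and it is surjective because $I^m/I^{m+1}$ is spanned by the classes of the degree-$m$ monomials in $a_1,\dots,a_\mu$. The entire content is injectivity, equivalently the assertion that if $F\in R[X_1,\dots,X_\mu]$ is homogeneous of degree $m$ with $F(a_1,\dots,a_\mu)\in I^{m+1}$, then every coefficient of $F$ lies in $I$.

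I would establish this injectivity by induction on $\mu$. For $\mu=1$ it is exactly the non-zerodivisor property of $a_1$: from $ra_1^{m}\in(a_1^{m+1})$ one gets $(r-sa_1)a_1^{m}=0$ for some $s$, hence $r\in(a_1)=I$. For the inductive step I would pass to $\bar R:=R/(a_1)$, where $\bar a_2,\dots,\bar a_\mu$ is again a regular sequence and $\bar R/\bar I=R/I$, and reduce a degree-$m$ relation for $I$ in $R$ to a relation for $\bar I$ in $\bar R$, peeling off the contribution of $a_1$ by means of the base case. This bookkeeping --- tracking how a value lying in $I^{m+1}$ forces the coefficients into $I$ one variable at a time --- is the one genuinely delicate point; alternatively one simply invokes the standard theorem that a regular sequence is quasi-regular.

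With (1) in hand, observe that when $I$ is prime $R/I$ is a domain, so $\operatorname{gr}_I(R)\cong(R/I)[X_1,\dots,X_\mu]$ is an integral domain. This lets me use the order function $\operatorname{ord}_I(x):=\sup\{m\mid x\in I^m\}$, which is finite precisely when $x\notin\bigcap_m I^m$. Because $\operatorname{gr}_I(R)$ is a domain the leading form of a product of two elements of finite order is the nonzero product of their leading forms, so $\operatorname{ord}_I(xy)=\operatorname{ord}_I(x)+\operatorname{ord}_I(y)$ whenever both are finite; this additivity is the only tool I need for (2) and (3). For (2), the inclusion $I^e\subseteq\widetilde{I^e}$ is automatic from $I^e\cdot I^{en}=I^{e(n+1)}$, so it remains to prove $\widetilde{I^e}\subseteq I^e$. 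Given $r$ with $rI^{en}\subseteq I^{e(n+1)}$ I may assume $d:=\operatorname{ord}_I(r)<\infty$; since $I\neq\{0\}$ the sequence has a member $a_1$, and by (1) the class of $a_1^{en}$ is $X_1^{en}\neq 0$, so $\operatorname{ord}_I(a_1^{en})=en$. Then $ra_1^{en}\in I^{e(n+1)}$ forces $d+en=\operatorname{ord}_I(ra_1^{en})\ge e(n+1)$, whence $d\ge e$ and $r\in I^e$.

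For (3) I would show directly that $I^e$ is $I$-primary, which gives $\operatorname{Ass}(R/I^e)=\{\sqrt{I^e}\}=\{I\}$ because $I$ is prime. Suppose $xy\in I^e$ with $y\notin I$. If $x\in\bigcap_m I^m\subseteq I^e$ there is nothing to prove; otherwise $\operatorname{ord}_I(x)<\infty$, and $\operatorname{ord}_I(y)=0$ since $y\notin I$, so additivity gives $\operatorname{ord}_I(x)=\operatorname{ord}_I(xy)\ge e$ and hence $x\in I^e$. The main obstacle in the whole lemma is thus the injectivity in (1); once $\operatorname{gr}_I(R)$ is known to be a domain, (2) and (3) are short consequences of the additivity of $\operatorname{ord}_I$.
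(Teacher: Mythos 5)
Your proposal is correct, but for parts (2) and (3) it takes a genuinely different route from the paper. For (1) you do essentially what the paper does: the paper simply cites the fact that a regular sequence is quasi-regular (\cite[Theorem 16.2(i)]{Mat89}), and your induction-on-$\mu$ sketch, whose inductive step you yourself flag as the delicate point, is in the end replaced by the same citation, so there is no difference in substance. For (2) the paper deduces the Ratliff--Rush property from (1) together with the external reference \cite[(1.2)]{HLS92}, whereas you give a self-contained argument: since $I$ is prime, (1) makes $\operatorname{gr}_I(R)\cong (R/I)[X_1,\dots,X_\mu]$ a domain, so the order function $\operatorname{ord}_I$ is additive on elements of finite order; then $rI^{en}\subseteq I^{e(n+1)}$ together with $\operatorname{ord}_I(a_1^{en})=en$ forces $\operatorname{ord}_I(r)\geq e$, and you correctly use the hypothesis $I\neq\{0\}$ to produce $a_1$ and dispose of the infinite-order case via $\bigcap_m I^m\subseteq I^e$. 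For (3) the paper argues homologically: (1) gives that each $I^m/I^{m+1}$ is a free $(R/I)$-module, and the exact sequences $0\to I^m/I^{m+1}\to R/I^{m+1}\to R/I^m\to 0$ together with induction on $e$ and the containment of associated primes in those of the sub- and quotient module yield $\operatorname{Ass}(R/I^e)=\{I\}$; you instead prove the stronger assertion that $I^e$ is $I$-primary, again by additivity of $\operatorname{ord}_I$. Your route buys self-containedness (no appeal to \cite{HLS92}) and a sharper intermediate output (primariness of $I^e$, equivalently $I^{(e)}=I^e$ for such $I$), with (2) and (3) both falling out of one tool; the paper's route for (3) avoids the domain hypothesis in its mechanism --- the filtration argument controls associated primes of $I^e$ by those of $R/I$ for any ideal generated by a regular sequence, prime or not --- and its citation-based (2) keeps the text short. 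Both arguments are complete and compatible with how Lemma 2.9 is used later (in Claim 2.12 and Lemma 2.10).
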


\begin{proof}
The assertion (1) holds by the fact that 
any regular sequence is quasi-regular (\cite[Theorem 16.2(i)]{Mat89}). 
The assertion (2) follows from (1) and \cite[(1.2)]{HLS92}. 

We show (3). 
By (1), $I^m/I^{m+1}$ is a free $(R/I)$-module for any $m \in \Z_{\geq 0}$. 
Consider an exact sequence:
\[
0 \to I^m/I^{m+1} \to R/I^{m+1} \to R/I^{m} \to 0.
\]
We deduce from induction on $e$ that for any $e \in \Z_{\geq 1}$, 
an arbitrary associated prime of $I^e$ is equal to $I$. 
Thus (3) holds. 
\end{proof}

\begin{lem}\label{l-ideal-power}
We use Notation \ref{n-sra}. 
Assume that $W$ is a local complete intersection scheme. 
Fix a positive integer $e$. 
Then the equation $f_*\mathcal{O}_X(-eE) = \mathcal{J}^e$ 
holds as subsheaves of $\MO_{\mathbb P^n_k}$. 
\end{lem}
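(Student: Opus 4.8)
The plan is to verify the asserted equality of coherent subsheaves of $\MO_{\mathbb P^n_k}$ stalk by stalk, reducing it to a local statement about powers of a complete intersection prime ideal, and then to invoke Lemma \ref{l-quasi-regular}(2). Write $\mathcal J=\mathcal I_W$ for the ideal sheaf of $W$. Since $f$ is proper and birational and $\mathbb P^n_k$ is normal, we have $f_*\MO_X=\MO_{\mathbb P^n_k}$, which is what allows both $\mathcal J^e$ and $f_*\MO_X(-eE)$ to be regarded as subsheaves of $\MO_{\mathbb P^n_k}$ in the first place; moreover $\MO_X(-E)=\mathcal J\cdot\MO_X$ by the universal property of the blowup, whence $\MO_X(-eE)=\mathcal J^e\cdot\MO_X$. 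With these identifications in place, the equality becomes a statement about stalks, and it suffices to compute $(f_*\MO_X(-eE))_x$ for each $x\in\mathbb P^n_k$.

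Fix $x\in\mathbb P^n_k$ and set $R:=\MO_{\mathbb P^n_k,x}$ and $I:=\mathcal J_x\subseteq R$. If $x\notin W$ then $I=R$ and $f$ is an isomorphism near $x$, so both stalks equal $R$ and there is nothing to prove; assume therefore $x\in W$. Because $\Spec R\to\mathbb P^n_k$ is flat and the formation of blowups commutes with flat base change, flat base change for the proper morphism $f$ identifies the stalk $(f_*\MO_X(-eE))_x$ with $H^0(X',\MO_{X'}(-eE'))$, where $f'\colon X'\to\Spec R$ is the blowup of $\Spec R$ along $I$ and $E'$ is its exceptional divisor.

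The crux of the argument is to compute this group of global sections. Writing $X'=\Proj\left(\bigoplus_{m\ge 0} I^m\right)$ for the Rees algebra of $I$, one has $\MO_{X'}(-eE')=\MO_{X'}(e)$. Since $R$ is a domain and $I\neq 0$, the ideal $I$ contains a nonzerodivisor, and a chartwise computation on the affine cover $\{D_+(a_it)\}$ attached to a set of generators $a_i$ of $I$ identifies a global section of $\MO_{X'}(e)$ with an element $g\in R$ satisfying $gI^n\subseteq I^{n+e}$ for $n\gg 0$. This yields
$$H^0(X',\MO_{X'}(e))=\bigcup_{n\ge 0}\left(I^{n+e}:_R I^n\right)=\widetilde{I^e},$$
the Ratliff--Rush ideal of $I^e$ (cf. Definition \ref{d-RR}). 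I expect this identification of the pushforward with the Ratliff--Rush closure to be the main technical obstacle, as it requires the explicit section computation on the $\Proj$ of the Rees algebra.

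Finally, because $W$ is integral the quotient $R/I=\MO_{W,x}$ is a domain, so $I$ is a nonzero prime ideal of $R$, and because $W$ is a local complete intersection the ideal $I$ is generated by a regular sequence. Lemma \ref{l-quasi-regular}(2) then applies and gives $\widetilde{I^e}=I^e$. Combining this with the computation above, we obtain $(f_*\MO_X(-eE))_x=I^e=(\mathcal J^e)_x$ for every $x\in\mathbb P^n_k$, which proves the desired equality of sheaves.
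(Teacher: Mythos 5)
Your proof is correct and has the same skeleton as the paper's: reduce to the stalk at a point $z\in\mathbb P^n_k$ (with $R:=\MO_{\mathbb P^n_k,z}$ and $I:=\mathcal J_z$, a nonzero prime generated by a regular sequence by the l.c.i.\ hypothesis and integrality of $W$), identify the sections of $\MO(-eE)$ on the blowup of $\Spec R$ along $I$ with the Ratliff--Rush ideal $\widetilde{I^e}$, and conclude $\widetilde{I^e}=I^e$ from Lemma \ref{l-quasi-regular}(2). The one genuine divergence is the middle identification: the paper passes to the blowup $Y_e=\Proj\bigl(\bigoplus_d I^{ed}\bigr)$ of $I^e$, uses the Veronese isomorphism $Y\simeq Y_e$ of \cite[Exercise 5.13 in Ch. II]{Har77} to convert $\MO_Y(-eE)$ into $\MO_{Y_e}(-E_e)$, and then quotes \cite[Fact 2.1]{HLS92} for $H^0(Y_e,\MO_{Y_e}(-E_e))=\widetilde{I^e}$; you instead stay on the blowup of $I$ throughout and compute $H^0(X',\MO_{X'}(e))$ directly on the charts $D_+(a_it)$ of the Rees algebra, obtaining $\bigcup_{n\ge 0}\bigl(I^{n+e}:_R I^n\bigr)$. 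This amounts to a self-contained proof of the fact the paper outsources to \cite{HLS92}; the price is the chart computation you rightly flag as the technical crux (it also uses $H^0(X',\MO_{X'})=R$, available since $R$ is regular, hence normal), plus one small point you assert without justification: that $\bigcup_n\bigl(I^{n+e}:_R I^n\bigr)$ agrees with $\widetilde{I^e}$ in the sense of Definition \ref{d-RR}, i.e.\ with $\bigcup_m\bigl((I^e)^{m+1}:(I^e)^m\bigr)$. That equality is easy --- the colon ideals $(I^{n+e}:I^n)$ increase with $n$ and the subsequence $n=em$ is cofinal, since $gI^n\subseteq I^{n+e}$ gives $g(I^e)^m\subseteq (I^e)^{m+1}$ once $em\ge n$ --- but it should be stated, as the two unions are not literally the same expression. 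Your flat-base-change reduction and the case split $z\notin W$ are slightly more explicit than the paper's, which simply localizes; both routes are sound.
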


\begin{proof}
Fix a point $z \in \mathbb P^n_k$ and set $R:=\MO_{\mathbb P^n_k, z}.$ 
Given a positive integer $e$, let 
\begin{eqnarray*}
I&:=&\Gamma(\Spec\,R, \mathcal J|_{\Spec\,R})\\
R(I^e)&:=&\bigoplus_{d=0}^{\infty} I^{ed}\\
g_e:Y_e=\Proj\,R(I^e) &\to& \Spec\,R,
\end{eqnarray*}
where $I^0:=R$ and $g_e$ is the blowup along $I^e$. 
We set $Y:=Y_1$ and $g:=g_1$. 
Let $E_e$ be the effective Cartier divisor such that $\MO_{Y_e}(-E_e):=I^e\MO_{Y_e}$. 
In particular, $E=E_1$. 
Thanks to \cite[Exercise 5.13 in Ch II]{Har77}, 
we have that $\rho_e:Y \xrightarrow{\simeq} Y_e$ and $(\rho_e)_*(eE)=E_e$. 
We get equations 
$$I^e=\widetilde{I^e}=H^0(Y_e, \MO_{Y_e}(-E_e))=H^0(Y, \MO_Y(-eE)),$$
where 
the first equation holds by Lemma \ref{l-quasi-regular}(2), 
the second one follows from \cite[Fact 2.1]{HLS92} and 
the third one is obtained by $\rho_e$. 
Hence we are done. 
\end{proof}

\begin{lem}\label{l-double-gr}
We use Notation \ref{n-sra}. 
Assume that $W$ is locally complete intersection. 
Then $R(X; D, -E)$ and $\bigoplus_{e=0}^{\infty}\p^{(e)}$ are isomorphic as $k$-algebras. 
\end{lem}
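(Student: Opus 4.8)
The plan is to build a bigraded $k$-algebra isomorphism, bigrading both sides by pairs $(d,e)\in\Z_{\geq 0}^2$. On the left, the $(d,e)$-piece of $R(X;D,-E)$ is $H^0(X, dD-eE)$. On the right, I regard $\bigoplus_{e}\p^{(e)}$ as bigraded by the internal grading $d$ of $A$ together with the symbolic-power index $e$, so that its $(d,e)$-piece is the degree-$d$ part $(\p^{(e)})_d$ of the homogeneous ideal $\p^{(e)}$. First I would match the two sides piece by piece as subspaces of $A$, and only afterwards check compatibility with multiplication.

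First I would compute $H^0(X,dD-eE)$ sheaf-theoretically. Since $E$ is effective and $X$ is integral, tensoring the inclusion $\MO_X(-eE)\hookrightarrow\MO_X$ with $\MO_X(dD)=f^*\MO_{\mathbb P^n_k}(d)$ gives an inclusion $\MO_X(dD-eE)\hookrightarrow f^*\MO_{\mathbb P^n_k}(d)$. Applying $f_*$, the projection formula together with Lemma \ref{l-ideal-power} yields $f_*\MO_X(dD-eE)=\MO_{\mathbb P^n_k}(d)\otimes\mathcal J^e=\mathcal J^e(d)$, where $\mathcal J=\mathcal I_W$ is the ideal sheaf of $W$; since $f$ is the blowup of the normal variety $\mathbb P^n_k$ we have $f_*\MO_X=\MO_{\mathbb P^n_k}$, so taking global sections gives
$$H^0(X,dD-eE)=H^0(\mathbb P^n_k,\mathcal J^e(d))\hookrightarrow H^0(\mathbb P^n_k,\MO_{\mathbb P^n_k}(d))=A_d.$$
The ideal sheaf associated with the homogeneous ideal $\p^e$ is $\mathcal J^e$, so by Remark \ref{r-saturation} the image of this inclusion is exactly the degree-$d$ part of $(\p^e)^{\sat}$.

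The decisive step is then to show $(\p^e)^{\sat}=\p^{(e)}$ for every $e\geq 1$, and this is where the hypothesis that $W$ is locally complete intersection enters. Writing a minimal primary decomposition $\p^e=\bigcap_{i=0}^r\q_i$ with $\sqrt{\q_0}=\p$, Lemma \ref{l-primary} gives $\p^{(e)}=\q_0$ and $(\p^e)^{\sat}=\bigcap_{i\in L}\q_i$ with $L=\{i:\sqrt{\q_i}\neq M\}$, so it suffices to prove $L=\{0\}$, i.e. that every associated prime of $\p^e$ other than $M$ equals $\p$. I would verify this by localizing: if $\mathfrak r$ is an associated prime of $\p^e$ with $\mathfrak r\neq M$, then $\p\subseteq\mathfrak r$, and since the affine cone over the lci scheme $W$ is lci away from its vertex, $\p A_{\mathfrak r}$ is generated by a regular sequence in $A_{\mathfrak r}$. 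As $\p^eA_{\mathfrak r}=(\p A_{\mathfrak r})^e$ with $\p A_{\mathfrak r}$ prime, Lemma \ref{l-quasi-regular}(3) shows the only associated prime of $\p^eA_{\mathfrak r}$ is $\p A_{\mathfrak r}$; since $\mathfrak r A_{\mathfrak r}$ is such an associated prime, $\mathfrak r=\p$. Hence $L=\{0\}$ and $(\p^e)^{\sat}=\q_0=\p^{(e)}$, so each graded piece $H^0(X,dD-eE)$ is identified with $(\p^{(e)})_d\subseteq A_d$.

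Finally I would upgrade these vector-space identifications to a ring isomorphism by realizing both algebras as graded subrings of $A[T]$. Sending $s\in H^0(X,dD-eE)$ to (its image in $A_d$)$\cdot T^e$ is compatible with the multiplications: under $\MO_X(dD-eE)\hookrightarrow f^*\MO_{\mathbb P^n_k}(d)$ the product map $\MO_X(dD-eE)\otimes\MO_X(d'D-e'E)\to\MO_X((d+d')D-(e+e')E)$ is simply multiplication of the corresponding polynomials while the exponent of $T$ adds. The resulting injective $k$-algebra homomorphism has image $\bigoplus_{e}\p^{(e)}T^e\subseteq A[T]$, which is the symbolic Rees algebra of $\p$. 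I expect the equality $(\p^e)^{\sat}=\p^{(e)}$ — equivalently, the absence of embedded primes of $\p^e$ away from the vertex — to be the main obstacle, since it is precisely the point where local completeness of intersection is indispensable; the bookkeeping of the bigrading and the multiplicative compatibility in the last step is routine by comparison.
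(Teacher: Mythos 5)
Your proposal is correct and follows essentially the same route as the paper: identify $H^0(X,dD-eE)$ with the degree-$d$ part of $(\p^e)^{\sat}$ via Lemma \ref{l-ideal-power}, the projection formula and Remark \ref{r-saturation}, then prove $(\p^e)^{\sat}=\p^{(e)}$ through Lemma \ref{l-primary} by showing, using the lci hypothesis and Lemma \ref{l-quasi-regular}(3), that every associated prime of $\p^e$ other than $M$ equals $\p$. Your only deviations are cosmetic improvements --- you localize directly at the associated prime $\mathfrak r$ where the paper passes through $A_{x_\ell}$ and a maximal ideal $\m$, and you make explicit the multiplicative compatibility (via the embedding into $A[T]$) that the paper leaves implicit.
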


\begin{proof}
Fix a non-negative integer $e$. 
We show that $\bigoplus_{d=0}^{\infty}H^0(X, dD-eE)$ is isomorphic to $\p^{(e)}$. 
By Lemma \ref{l-ideal-power}, we have $ f_*\mathcal{O}_X(-eE) \simeq \mathcal{J}^e$. 
By the projection formula, 
we get 
$$ f_*\mathcal{O}_X(dD-eE) \simeq \mathcal{J}^e\otimes_{\MO_{\mathbb P^n_k}} \MO_{\mathbb P^n_k}(d) = \mathcal{J}^e(d).$$
Thanks to Remark \ref{r-saturation}, we obtain an isomorphism: 
$$(\p^e)^{\sat} \simeq \bigoplus_{d=0}^{\infty}H^0(X, dD-eE).$$ 

\begin{claim}\label{c-double-gr}
Any associated prime ideal of $\p^e$ is equal to either $\p$ or $M$. 
\end{claim}

\begin{proof}[Proof of Claim \ref{c-double-gr}]
Assume that there exists an associated prime ideal $\q$ of $\p^e$ 
other than $\p$ or $M$. 
Let us derive a contradiction. 
Since $\q \neq M=(x_0, \cdots, x_n)$, there is $x_{\ell}$ that is not contained in $\q$. 
Then $\q A_{x_{\ell}}$ is an associated prime ideal of $\p^e A_{x_{\ell}}$. 
Take a maximal ideal $\m$ of $A_{x_{\ell}}$ containing $\q A_{x_{\ell}}$. 
Then $\q A_{\m}$ is an associated prime ideal of $\p^e A_{\m}$ 
other than $\p A_{\m}$. 
Since $W$ is a local complete intersection scheme, 
we have that $\p A_{\m}$ is a prime ideal generated by a regular sequence, 
which contradicts Lemma \ref{l-quasi-regular}(3). 
This completes the proof of Claim \ref{c-double-gr}. 
\end{proof}
For a minimal primary decomposition $(\p^e)^{\sat}=\bigcap_{i=0}^{r} \q_i$ satisfying $\sqrt{\q_0} =\p$, 
we have that 
$$\p^{(e)}=\q_0=(\p^e)^{\sat} \simeq \bigoplus_{d=0}^{\infty}H^0(X, dD-eE),$$ 
where the first equation holds by Lemma \ref{l-primary}(1) and 
the second equation follows from Lemma \ref{l-primary}(2) and Claim \ref{c-double-gr}. 
This completes the proof of Lemma \ref{l-double-gr}
\end{proof}

\begin{prop}\label{p-double-gr}
We use Notation \ref{n-sra}. 
Assume that $W$ is smooth over $k$.  
Then the following are equivalent. 
\begin{enumerate}
\item $R(X; D, -E)$ is a noetherian ring. 
\item $\bigoplus_{e=0}^{\infty} \p^{(e)}$ is a noetherian ring. 
\item The Cox ring ${\rm Cox}(X)$ of $X$ is a noetherian ring. 
\end{enumerate}
\end{prop}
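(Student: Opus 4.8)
The plan is to reduce all three equivalences to two inputs: the $k$-algebra isomorphism of Lemma \ref{l-double-gr} and a structural description of ${\rm Cox}(X)$ as a one-variable extension of $R(X;D,-E)$. First, since $W$ is smooth over $k$ it is in particular a local complete intersection, so Lemma \ref{l-double-gr} gives a $k$-algebra isomorphism $R(X;D,-E)\cong\bigoplus_{e\ge 0}\p^{(e)}$. Isomorphic $k$-algebras are simultaneously noetherian, so this settles $(1)\Leftrightarrow(2)$ with no further work, and it remains only to compare $(1)$ and $(3)$.

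The core of the argument is to identify ${\rm Cox}(X)$. Since $X$ is the blowup of the smooth variety $\mathbb P^n_k$ along the smooth centre $W$ (which I may assume of codimension $\ge 2$, the codimension-one case being degenerate), standard blowup theory gives ${\rm Cl}(X)=\Pic(X)=\Z D\oplus\Z E\cong\Z^2$, so I would compute ${\rm Cox}(X)$ using $\Gamma=\Z D\oplus\Z E$, namely ${\rm Cox}(X)=\bigoplus_{(d,e)\in\Z^2}H^0(X,dD+eE)$. I would then analyse the graded pieces. A nef/effective intersection argument, pairing an effective representative of $dD+eE$ with the nef $1$-cycle $D^{n-1}$ and using $D^{n-1}\cdot E=0$ together with $D^n>0$, shows $H^0(X,dD+eE)=0$ whenever $d<0$. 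For $e\ge 0$ one has $f_*\MO_X(eE)=\MO_{\mathbb P^n_k}$ (a rational function with poles only along the exceptional divisor descends to $\mathbb P^n_k$, as $W$ has codimension $\ge 2$), whence $H^0(X,dD+eE)=H^0(\mathbb P^n_k,\MO_{\mathbb P^n_k}(d))$; moreover multiplication by the canonical section $s_E\in H^0(X,E)$, which pushes to $1$ under $f_*$, identifies these pieces compatibly. Combined with Lemma \ref{l-ideal-power}, which handles $e\le 0$ and recovers exactly the graded pieces of $R(X;D,-E)$, this shows the $e>0$ part of ${\rm Cox}(X)$ is freely generated over the $e\le 0$ part by $s_E$, and hence ${\rm Cox}(X)=R(X;D,-E)[s_E]$, a one-variable $R(X;D,-E)$-algebra.

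Granting this identification, $(1)\Rightarrow(3)$ is immediate: if $R(X;D,-E)$ is noetherian, then ${\rm Cox}(X)=R(X;D,-E)[s_E]$ is a finitely generated algebra over a noetherian ring, being a quotient of the polynomial ring $R(X;D,-E)[T]$ via $T\mapsto s_E$, hence noetherian. For $(3)\Rightarrow(1)$ I would not argue directly, since a graded subring of a noetherian ring need not be noetherian and here $R(X;D,-E)$ is not a module retract of ${\rm Cox}(X)$; instead I would invoke Lemma \ref{l-MDS}. Indeed $X$ is projective and smooth, hence $\Q$-factorial, geometrically integral and geometrically normal, with ${\rm Cl}(X)$ finitely generated free and ${\bf Pic}^0_X$ of dimension zero because $\Pic(X)=\Z^2$ is discrete. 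Condition $(3)$ is precisely hypothesis (d), so Lemma \ref{l-MDS}(1) applied to the finitely generated subsemigroup $\Gamma_1=\Z_{\ge 0}D+\Z_{\ge 0}(-E)$ yields that $R(X;D,-E)=R(X,\Gamma_1)$ is a finitely generated $k$-algebra, thus noetherian. This closes the cycle $(1)\Leftrightarrow(2)$ and $(1)\Leftrightarrow(3)$.

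The main obstacle is the structural identification ${\rm Cox}(X)=R(X;D,-E)[s_E]$: it hinges on the vanishing $H^0(X,dD+eE)=0$ for $d<0$ and on the precise computation of the pieces with $e\ge 0$ via $f_*\MO_X(eE)=\MO_{\mathbb P^n_k}$. Once these are in hand, both implications of $(1)\Leftrightarrow(3)$ become formal.
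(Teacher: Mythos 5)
Your proof is correct and takes essentially the same route as the paper: $(1)\Leftrightarrow(2)$ via Lemma \ref{l-double-gr}, $(3)\Rightarrow(1)$ via Lemma \ref{l-MDS}(1), and $(1)\Rightarrow(3)$ by observing that ${\rm Cox}(X)$ is generated over $R(X;D,-E)$ by the canonical section of $E$, using the vanishing $H^0(X,dD+eE)=0$ for $d<0$. You merely flesh out steps the paper asserts without proof (the intersection-theoretic vanishing and $f_*\MO_X(eE)=\MO_{\mathbb P^n_k}$); the only nitpick is that ``freely generated'' by $s_E$ is a slight overstatement, but harmless since your argument only uses that ${\rm Cox}(X)$ is a quotient of $R(X;D,-E)[T]$.
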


\begin{proof}
It follows from Lemma \ref{l-double-gr} that (1) is equivalent to (2). 
Since $X$ is the blowup of $\mathbb P^n_k$ along a smooth scheme $W$, 
the assumptions of Lemma \ref{l-MDS} hold. 
Then, thanks to Lemma \ref{l-MDS}(1), we have that (3) implies (1). 
Thus it suffices to show that (1) implies (3). 
Since it holds that $H^0(X, dD-eE)=0$ for $d \in \Z_{<0}$ and $e \in \Z$, 
we get an isomorphism: 
$$\bigoplus_{d, e \in \Z, d\geq 0} H^0(X, dD-eE) \xrightarrow{\simeq} \bigoplus_{d, e \in \Z} H^0(X, dD-eE).$$
Thus we have a natural inclusion: 
$$R(X; D, -E)=\bigoplus_{d, e \in \Z_{\geq 0}} H^0(X, dD-eE) \hookrightarrow \bigoplus_{d, e \in \Z, d\geq 0} H^0(X, dD-eE).$$
The right hand side is generated by $H^0(X, E)$ as an $R(X; D, -E)$-algebra. 
Therefore, if $R(X; D, -E)$ is a noetherian ring, then so is $\bigoplus_{d, e \in \Z} H^0(X, dD-eE)$. 
Hence, also ${\rm Cox}(X)$ is a noetherian ring. 
Thus (1) implies (3). 
\end{proof}

\section{The main theorem}\label{s-main-thm}

\subsection{Construction in a general setting}

The purpose of this subsection is to give a sufficient condition 
under which the blowup of a smooth subvariety in a projective space 
has a nef Cartier divisor that is not semi-ample (Notation \ref{n-1}, Proposition \ref{p-not-sa}).

\begin{nota}\label{n-1}
We use notation as follows. 
\begin{enumerate}
\item[(i)] 
Let $k$ be a field.  
We work over $k$ unless otherwise specified 
(e.g. a projective scheme means a scheme that is projective over $k$). 
\item[(ii)] 
Let $V$ be a smooth projective variety. 
Set $d:=\dim V$. 
\item[(iii)] 
Let $M$ be a nef Cartier divisor on $V$ which is not semi-ample. 
\item[(iv)] 
Fix a closed immersion: $V \subset \mathbb P^n_k$. 
Let $H$ be a very ample Cartier divisor such that $\MO_{\mathbb P^n_k}(H) \simeq \MO_{\mathbb P^n_k}(1)$. 
We set $H_V$ to be the pullback of $H$ to $V$. 
\item[(v)] 
Assume that there exists a positive integer $r$ satisfying the following property: 
if $\Lambda$ denotes the linear system of $H^0(\mathbb P^n_k, \MO_{\mathbb P^n_k}(r))$ 
consisting of the effective divisors containing $V$, 
then the following conditions hold. 
\begin{enumerate}
\item[(v-1)] 
The base locus of $|\Lambda|$ is set-theoretically equal to $V$, 
i.e. for any point $y \in \mathbb P^n_k \setminus V$, 
there exists a hypersurface $S_0$ of $\mathbb P^n_{k}$ of degree $r$ 
such that $V \subset S_0$ and $y \not\in S_0$. 
\item[(v-2)] 
For any closed point $y \in V$, 
there exist an open neigbourhood $U$ of $y \in \mathbb P^n_k$ 
and hypersurfaces $S_1, \cdots, S_{n-\dim V}$ of $\mathbb P^n_k$ of degree $r$ 
such that $V$ is contained in $S_1 \cap \cdots \cap S_{n-\dim V}$ 
and that two subschemes $V \cap U$ and $S_1 \cap \cdots \cap S_{n-\dim V} \cap U$ 
of $\mathbb P^n_k$ are coincide. 
\end{enumerate}
\item[(vi)] 
Assume that there are a smooth prime divisor $W$ on $V$ and 
positive integers $s$ and $t$ satisfying the following properties. 
\begin{enumerate}
\item[(vi-1)] 
$st>r$. 
\item[(vi-2)] 
$W \sim stH_V-tM.$ 
\end{enumerate}
\item[(vii)]  
Let $f:X \to \mathbb P^n_k$ be the blowup along $W$. 
We set $V':=f_*^{-1}V$, $E:=\Ex(f)$ and 
$$S':=rf^*H-E.$$
Note that $E$ is a smooth prime divisor on $X$. 
Let $g:V' \xrightarrow{\simeq} V$ be the induced isomorphism. 
\item[(viii)] 
Set 
$$L:=(st-r)f^*H+S'.$$
\end{enumerate}
\end{nota}

\begin{lem}\label{l-etale-local}
Let $k$ be a field and let $Y:=\mathbb A^n_k=\Spec\,k[y_1, \cdots, y_n]$ be the $n$-dimensional affine space. 
For $i \in \{1, \cdots, n\}$, set $T_i:=V(y_i)$ to be the coordinate hyperplane of $Y=\mathbb A^n_k$. 
Let $q$ be a positive integer satisfying $q\leq n-1$. 
Set $V:=T_1 \cap \cdots \cap T_q$ and $W:=T_1 \cap \cdots \cap T_{q+1}$. 
Let $f:X \to Y$ be the blowup along $W$ and 
let $V'$ and $T'_i$ be the proper transforms of $V$ and $T_i$, respectively. 
Then an equation $V'=T'_1 \cap \cdots \cap T'_q$ holds. 
\end{lem}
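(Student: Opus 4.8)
The plan is to reduce the whole statement to an explicit computation on the standard affine charts of the blowup, since everything in sight is a coordinate subspace of affine space. As the ideal of the center is $(y_1,\dots,y_{q+1})$, I realize $X=\mathrm{Bl}_W Y$ as the closed subscheme of $Y\times\mathbb P^q$, with homogeneous coordinates $z_1,\dots,z_{q+1}$ on $\mathbb P^q$, cut out by the relations $y_iz_j=y_jz_i$ for $1\le i,j\le q+1$. This $X$ is covered by the affine charts $U_\ell:=\{z_\ell\neq 0\}$ for $\ell\in\{1,\dots,q+1\}$. On $U_\ell$ I use the coordinates $y_\ell$, the ratios $w_i:=z_i/z_\ell$ for $i\in\{1,\dots,q+1\}\setminus\{\ell\}$, and $y_{q+2},\dots,y_n$, so that $U_\ell\cong\mathbb A^n$, the relations become $y_i=y_\ell w_i$, and the exceptional divisor is $E\cap U_\ell=V(y_\ell)$. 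Because the assertion is local, it suffices to verify the claimed equality chart by chart.

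First I would dispose of the charts $U_\ell$ with $\ell\in\{1,\dots,q\}$. On such a chart $f^*y_\ell=y_\ell$ already cuts out $E$, so the total transform of $T_\ell=V(y_\ell)$ is exactly $E$, and hence its strict transform $T'_\ell$ does not meet $U_\ell$. Consequently $T'_1\cap\cdots\cap T'_q$ is empty on $U_\ell$. The same reasoning applies to $V=V(y_1,\dots,y_q)$: its total transform contains $f^*y_\ell=y_\ell$, so it is contained in $V(y_\ell)=E$, whence $V'$ too is empty on $U_\ell$. Thus both sides of the desired equation are empty on every chart $U_\ell$ with $\ell\le q$.

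It remains to analyze the chart $U_{q+1}$, where $y_i=y_{q+1}w_i$ for $i\in\{1,\dots,q\}$ and $E\cap U_{q+1}=V(y_{q+1})$. For each $i\le q$ the total transform of $T_i$ is $V(y_{q+1}w_i)$, and removing the exceptional component gives $T'_i\cap U_{q+1}=V(w_i)$; taking the scheme-theoretic intersection yields $(T'_1\cap\cdots\cap T'_q)\cap U_{q+1}=V(w_1,\dots,w_q)$. On the other hand, the total transform of $V$ is defined by the ideal $(y_{q+1}w_1,\dots,y_{q+1}w_q)=y_{q+1}\cdot(w_1,\dots,w_q)$, and saturating with respect to the equation $y_{q+1}$ of $E$ produces the ideal $(w_1,\dots,w_q)$, so $V'\cap U_{q+1}=V(w_1,\dots,w_q)$ as well. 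Comparing the two descriptions on each chart then gives $V'=T'_1\cap\cdots\cap T'_q$.

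The computation itself is elementary, and the main points that require care are of a bookkeeping nature. The chief one is to ensure that the equality is scheme-theoretic and not merely set-theoretic: this is why on $U_{q+1}$ I pass to ideals and check that the strict transform of $V$, obtained by saturating the total-transform ideal along $E$, coincides with the ideal $(w_1,\dots,w_q)$ defining the scheme-theoretic intersection $\bigcap_i T'_i$, rather than just comparing underlying sets. The other mild subtlety is to confirm that the charts $U_\ell$ with $\ell\le q$ contribute nothing to either side, so that the agreement on the single chart $U_{q+1}$ is enough to conclude.
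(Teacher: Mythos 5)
Your proof is correct, and it takes a genuinely different route from the paper's. The paper reduces to the case $q=n-1$ via the compatibility of blowups with flat base change, so that $W$ is the origin and $V$ is a line through it; it then observes that the inclusion $V'\subset T'_1\cap\cdots\cap T'_{n-1}$ is clear and concludes set-theoretically, after passing to an algebraic closure, from the bijection between closed points of $E$ and lines through the origin, which forces $T'_1\cap\cdots\cap T'_{n-1}\cap E$ to be a single point. You instead compute directly on all $q+1$ standard charts of the blowup inside $\mathbb{A}^n_k\times\mathbb{P}^q$, describing each strict transform as the colon saturation of the total-transform ideal by the local equation of $E$; your chart-by-chart bookkeeping (emptiness of both sides on $U_\ell$ for $\ell\leq q$, and the identity $\left(y_{q+1}w_1,\dots,y_{q+1}w_q\right):y_{q+1}^{\infty}=(w_1,\dots,w_q)$ on $U_{q+1}$, which holds since $(w_1,\dots,w_q)$ is prime and does not contain $y_{q+1}$) checks out. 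Your approach buys a genuinely scheme-theoretic equality of ideals, with no reduction step and no passage to $\overline{k}$, whereas the paper's argument as written verifies the equality only on points; on the other hand, the paper's route is shorter and coordinate-free, and the set-theoretic statement is all that is ultimately used in Proposition \ref{p-not-sa} (only the conclusion $x\notin S'_{i_0}$ is needed there). The one input you should cite explicitly is the standard fact that on a chart where $E$ is cut out by a single equation $g$, the ideal of the strict transform of a closed subscheme $Z$ is the saturation $\left(f^{-1}\mathcal{I}_Z\cdot\MO_X : g^{\infty}\right)$, equivalently that the strict transform is the closure of $f^{-1}(Z\setminus W)$; with that in hand, every step of your computation is elementary and complete.
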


\begin{proof}
Since blowups are commutative with flat base changes, we may assume that $q=n-1$. 
Thus $W$ is the origin and $V$ is a line passing through $W$. 
The inclusion $V' \subset T'_1 \cap \cdots \cap T'_{n-1}$ is clear, 
hence it suffices to prove that $T'_1 \cap \cdots \cap T'_{n-1} \cap E$ is one point, 
where $E$ denotes the $f$-exceptional prime divisor. 
To prove this, we may assume that $k$ is algebraically closed. 
Then $T'_1 \cap \cdots \cap T'_{n-1} \cap E$ is one point, since there is a canonical bijection between the set $E(k)$ of 
the closed points of $E$ and the set of the lines on $\mathbb P^n_k$ passing through $W$. 
\end{proof}

\begin{prop}\label{p-not-sa}
We use Notation~\ref{n-1}. 
Then the following hold. 
\begin{enumerate}
\item 
The base locus of the complete linear system $|S'|$ 
is contained in $V'$. 
\item 
$L|_{V'} \sim t g^*M$. 
\item 
$L$ is a nef Cartier divisor which is not semi-ample. 
\end{enumerate}
\end{prop}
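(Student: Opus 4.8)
The plan is to prove the three assertions in order, since (1) feeds into (2), and (2) is the crucial input for the non-semi-ampleness part of (3). Let me lay out the setup and dependencies before diving in.

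For (1), the goal is to show $\Bs|S'| \subset V'$, where $S' = rf^*H - E$. The key observation is that sections of $\MO_X(S') = \MO_X(rf^*H - E)$ correspond, via the projection formula and $f_*\MO_X(-E) = \mathcal{I}_W$-type considerations, to degree-$r$ hypersurfaces in $\mathbb{P}^n_k$ passing through $W$. But condition (v) of Notation~\ref{n-1} gives us hypersurfaces of degree $r$ through $V \supset W$ that cut out $V$ set-theoretically away from $V$, and since $W \subset V$, these same hypersurfaces contain $W$. So **first I would** translate the base locus of $|S'|$ on $X$ into a base-locus statement downstairs on $\mathbb{P}^n_k$: a point $x \in X \setminus V'$ lies off the base locus precisely when its image $y = f(x)$ admits a degree-$r$ hypersurface $S_0$ with $W \subset S_0$ (equivalently the corresponding section of $S'$ is nonzero at $x$) not passing through $y$. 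For $y \notin V$, condition (v-1) supplies exactly such an $S_0 \supset V \supset W$. The points of $X \setminus V'$ lying over $V$ need the local analysis from Lemma~\ref{l-etale-local} and condition (v-2): there the hypersurfaces $S_1, \dots, S_{n-\dim V}$ realize $V$ as a local complete intersection, and after blowing up $W$, Lemma~\ref{l-etale-local} identifies the proper transform $V'$ with the intersection of the proper transforms, so any point over $V$ but outside $V'$ is separated by one of the $S_i$.

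For (2), I want $L|_{V'} \sim t g^*M$. Restricting $L = (st-r)f^*H + S' = (st - r)f^*H + rf^*H - E = st\, f^*H - E$ to $V'$: since $g : V' \xrightarrow{\simeq} V$ and $f^*H|_{V'} = g^* H_V$, the term $st\, f^*H|_{V'} = st\, g^* H_V$. The exceptional divisor restricts as $E|_{V'} = g^*(W)$ because $E$ meets $V'$ along the preimage of $W \subset V$, and $W$ is a smooth prime divisor on $V$ being blown up; the scheme-theoretic intersection $E \cap V'$ maps isomorphically (under $g$) to $W$. Then using (vi-2), $W \sim st H_V - tM$, so
\[
L|_{V'} \sim st\, g^*H_V - g^*W \sim st\, g^*H_V - g^*(st H_V - tM) = t\, g^*M.
\]
The one point requiring care here is the identification $E|_{V'} = g^* W$ as divisor classes, which again follows from the local picture in Lemma~\ref{l-etale-local} (the proper transform $V'$ and $E$ meet transversally along a copy of $W$).

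For (3), nefness and non-semi-ampleness. Nefness: $L = st\, f^*H - E$; to show $L \cdot C \geq 0$ for every curve $C$ on $X$, I split into curves contracted by $f$ (contained in fibers of $E \to W$) and curves dominating their image. For the former, $f^*H \cdot C = 0$ and $-E \cdot C \geq 0$ on exceptional curves, giving $L \cdot C \geq 0$; for the latter, one pushes down and uses that $st\,H$ is very ample hence the intersection stays nonnegative after subtracting the exceptional contribution — more carefully, I would invoke that $L|_{V'} \sim t g^* M$ is nef by (2) (as $M$ is nef and $g$ an isomorphism) together with nefness on the other natural boundary divisors, or argue directly that $L$ is a nonnegative combination that meets each curve nonnegatively. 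Non-semi-ampleness is the payoff: if $L$ were semi-ample, then by Lemma~\ref{l-MDS}(2) — valid since $X$ is a blowup of $\mathbb{P}^n_k$ along a smooth center, so all its hypotheses hold — $L|_{V'}$ would be semi-ample as well, being the restriction of a semi-ample divisor. But $L|_{V'} \sim t g^* M$, and $M$ is not semi-ample by (iii); since $g$ is an isomorphism and $t > 0$, $t g^* M$ is not semi-ample either, a contradiction.

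\emph{The hard part} will be assertion (1), specifically the analysis of base points of $|S'|$ lying over $V$ itself. Away from $V$ the separation is immediate from (v-1), but over $V$ one must combine the local-complete-intersection description in (v-2) with the blowup computation of Lemma~\ref{l-etale-local} to verify that the proper transform $V'$ genuinely captures the entire base locus over $V$ — this étale-local bookkeeping, matching the scheme structure of $V' = T_1' \cap \cdots \cap T_q'$ against the hypersurfaces $S_i$, is where the real work lies. Once (1) and (2) are secured, nefness in (3) is a routine curve-by-curve check and non-semi-ampleness is a clean restriction argument via Lemma~\ref{l-MDS}(2).
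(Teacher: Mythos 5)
Your treatment of (1) and (2) matches the paper's proof in both structure and substance: the two-case analysis of the base locus (points over $\mathbb P^n_k \setminus V$ handled by (v-1), points over $V$ handled by (v-2) together with the formal-local identification $V' \cap f^{-1}(U) = S'_1 \cap \cdots \cap S'_{n-\dim V} \cap f^{-1}(U)$ from Lemma~\ref{l-etale-local}), and the computation $L|_{V'} = st\, g^*H_V - g^*W \sim t\,g^*M$ via (vi-2). However, your nefness argument in (3) has a genuine gap. The correct dichotomy---the one the paper uses, and the one for which (1) was proved in the first place---is $\Gamma \subset V'$ versus $\Gamma \not\subset V'$, not your ``contracted by $f$ versus dominating its image.'' For $\Gamma \not\subset V'$, assertion (1) produces a member of $|S'|$ not containing $\Gamma$, so $S' \cdot \Gamma \geq 0$, and combined with $(st-r) > 0$ (this is exactly where hypothesis (vi-1) enters; note your sketch never uses it) and the nefness of $f^*H$ one gets $L \cdot \Gamma \geq 0$; for $\Gamma \subset V'$ one uses (2). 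Your proposed route for non-contracted curves---push down and use that $st\,H$ is very ample ``after subtracting the exceptional contribution''---fails as stated: for a curve $C$ whose image meets $W$ with large total multiplicity, $E \cdot C$ can exceed $st\, H \cdot f_*C$, and for a section of $E \to W$ the degree of $\MO_E(-E)$ can be negative; nothing in Notation~\ref{n-1} bounds these quantities (the only numerical hypothesis is $st > r$). Your fallback phrases (``nefness on the other natural boundary divisors,'' ``a nonnegative combination that meets each curve nonnegatively'') never resolve into an argument. Relatedly, your opening claim that ``(1) feeds into (2)'' is wrong: (2) is a self-contained divisor-class computation, and (1) feeds into the nefness half of (3)---precisely the dependency your sketch fails to exploit.

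Separately, your appeal to Lemma~\ref{l-MDS}(2) in the non-semi-ampleness step is both unnecessary and illegitimate. Hypothesis (d) of that lemma requires ${\rm Cox}(X)$ to be noetherian, which is not known here---indeed, it is exactly what the main theorem ultimately disproves for this $X$---so the assertion that ``all its hypotheses hold'' is false, and invoking the lemma would be circular. Fortunately, the parenthetical reason you give in the same sentence is the correct, elementary one and is what the paper uses: if $nL$ is globally generated then so is its restriction $nL|_{V'}$, so semi-ampleness of $L$ would force semi-ampleness of $L|_{V'} \sim t\,g^*M$, contradicting (iii) since $g$ is an isomorphism and $t > 0$. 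With the nefness dichotomy repaired as above and the spurious citation removed, your outline coincides with the paper's proof.
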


\begin{proof}
We show (1). 
Take a closed point $x \in X \setminus V'$. 
We set $y:=f(x)$. 
It suffices to show that the base locus $B(|S'|)$ of $|S'|$ does not contain $x$. 
We separately treat the following two cases: $y \not\in V$ and $y \in V$. 

Assume that $y \not\in V$. 
By Notation~\ref{n-1}(v-1), 
there exists a hypersurface $S_0$ of $\mathbb P^n_k$ of degree $r$ 
such that $V \subset S_0$ and $y \not\in S_0$. 
It holds that 
$$rf^*H \sim f^*S_0=S'_0+aE,$$
where $a \in \Z_{>0}$ and $S'_0$ is the proper transform of $S_0$. 
In particular, we have that 
$$B(|S'|) \subset \Supp(S'_0+E)=f^{-1}(S_0).$$ 
It follows from $y \not\in S_0$ that $x \not\in f^{-1}(S_0)$. 
Hence, $x \not\in B(|S'|)$. 
This completes the proof for the case where $y \not\in V$. 

Assume that $y \in V$. 
We have that $x \in E \setminus V'$. 
By Notation~\ref{n-1}(v-2), 
there exist an open neigbourhood $U$ of $y \in \mathbb P^n_k$ 
and hypersurfaces $S_1, \cdots, S_{n-\dim V}$ of $\mathbb P^n_k$ of degree $r$ 
such that $V$ is contained in $S_1 \cap \cdots \cap S_{n-\dim V}$ 
and that two subschemes $V \cap U$ and $S_1 \cap \cdots \cap S_{n-\dim V} \cap U$ 
of $\mathbb P^n_{k}$ are the same. 
In particular, $S_1, \cdots, S_{n-\dim V}$ are smooth at $y$ and 
form a part of a regular system of parameters of $\MO_{\mathbb P^n_k, y}$ (cf. \cite[Theorem 17.4]{Mat89}). 
Therefore, thanks to Cohen's structure theorem, the situation is the same, 
up to taking the formal completions, as in the statement of Lemma \ref{l-etale-local}. 
It follows from Lemma \ref{l-etale-local} and the faithfully flatness of completions 
(cf. \cite[Theorem 7.5(ii)]{Mat89}) that an equation 
$$V' \cap f^{-1}(U)=S'_1 \cap \cdots \cap S'_{n-\dim V}\cap f^{-1}(U)$$
holds, where each $S'_i$ denotes the proper transform of $S_i$. 
In particular, it holds that $x \notin S'_{i_0}$ for some $i_0 \in \{1, \cdots, n-\dim V\}$. 
Since $S'_{i_0}$ is smooth at a point $y$ of $W$, we have that 
$$S'=f^*(rH)-E \sim f^*S_{i_0}-E=S'_{i_0}.$$
Thus, in any case, the base locus $B(|S'|)$ does not contain $x$. 
Hence, (1) holds. 

The assertion (2) holds by the following computation:  
\begin{eqnarray*}
L|_{V'}&=&\left((st-r)f^*H+S'\right)\Big{|}_{V'}\\
&\sim& g^*\left((st-r)H_V+(S|_V-W)\right) \\
&\sim& g^*\left((st-r)H_V+(rH_V-(stH_V-tM))\right) \\
&\sim& t g^*M.
\end{eqnarray*}

We show (3). 
Since $L|_{V'}$ is not semi-ample by (2) and Notation~\ref{n-1}(iii), neither is $L$. 
Thus it suffices to show that $L=(st-r)f^*H+S'$ is nef. 
Take a curve $\Gamma$ on $X$. 
If $\Gamma \not\subset V'$, 
then we get $((st-r)f^*H+S') \cdot \Gamma \geq 0$ by (1). 
If $\Gamma \subset V'$, then (2) implies that $L \cdot \Gamma \geq 0$. 
In any case, we obtain $L \cdot \Gamma \geq 0$, 
and hence $L$ is nef. 
Thus (3) holds. 
\end{proof}

\subsection{Proof of the main theorem}

In this subsection, we prove the main theorem of this paper (Theorem \ref{t-main2}). 
Theorem \ref{t-main2} is a formal consequence of Theorem \ref{t-main} and some results established before. 
The main part of Theorem \ref{t-main} is to find schemes and divisors satisfying Notation~\ref{n-1}. 
To this end, we start with the following lemma. 

\begin{lem}\label{l-ample-connected}
Let $k$ be a field. 
Let $V$ be a smooth projective connected scheme over $k$ such that $\dim V\geq 2$. 
Let $W$ be an ample effective Cartier divisor. 
Then $W$ is connected. 
\end{lem}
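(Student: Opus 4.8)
The plan is to detect the connectedness of $W$ cohomologically, working with a high infinitesimal thickening of $W$ rather than $W$ itself, so as to avoid any appeal to Kodaira-type vanishing, which would be unavailable over $k$ of positive characteristic. First I would record that $V$, being smooth and connected over $k$, is regular and connected, hence integral; consequently $k':=H^0(V,\MO_V)$ is a field, finite over $k$. Moreover, since $W$ is an effective Cartier divisor, the thickening $nW$, defined as the closed subscheme cut out by $\MO_V(-nW)$, has the same underlying topological space as $W$ for every $n\geq 1$. Thus it suffices to prove that $nW$ is connected for one convenient value of $n$.

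Next I would assemble two vanishing statements. The elementary one is $H^0(V,\MO_V(-nW))=0$ for all $n\geq 1$: a nonzero section of the anti-ample sheaf $\MO_V(-nW)$ would produce an effective divisor $Z\sim -nW$, and intersecting with a curve $C\not\subset\Supp Z$ (which exists because $\dim V\geq 2$) would give $0\leq Z\cdot C=-nW\cdot C<0$, contradicting the ampleness of $W$. The decisive one is $H^1(V,\MO_V(-nW))=0$ for $n\gg 0$, which I would obtain from Serre duality and Serre vanishing: $H^1(V,\MO_V(-nW))\cong H^{d-1}(V,\omega_V\otimes\MO_V(nW))^{*}$, and since $\MO_V(nW)$ is ample with $d-1\geq 1$, the right-hand group vanishes for $n$ large. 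It is precisely here that the hypothesis $\dim V\geq 2$ is used.

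I would then conclude by feeding these vanishings into the long exact cohomology sequence of
\[
0 \to \MO_V(-nW) \to \MO_V \to \MO_{nW} \to 0,
\]
which yields $H^0(nW,\MO_{nW})\cong H^0(V,\MO_V)=k'$ for $n\gg 0$. Since $k'$ is a field, it carries no nontrivial idempotents, so the proper $k$-scheme $nW$ admits no decomposition into two nonempty open-and-closed pieces (such a decomposition would split $H^0(nW,\MO_{nW})$ as a nontrivial product ring); hence $nW$ is connected. As $nW$ and $W$ have the same underlying space, $W$ is connected as well.

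The main obstacle, and the reason for the whole strategy, is that one cannot take $n=1$: the group $H^1(V,\MO_V(-W))$ need not vanish in positive characteristic, so the naive restriction argument on $W$ itself may fail exactly where Kodaira vanishing would have been invoked in characteristic zero. Passing to a large multiple $nW$ is harmless topologically and replaces the missing vanishing by Serre vanishing, which holds in every characteristic. The remaining ingredients—integrality of $V$, the identification of the underlying spaces of $W$ and $nW$, and the idempotent criterion for connectedness—are routine.
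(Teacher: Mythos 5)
Your proof is correct and follows essentially the same route as the paper: both replace $W$ by a high multiple $nW$ (which has the same underlying space) and kill $H^1(V,\MO_V(-nW))$ via Serre duality plus Serre vanishing, exactly to avoid Kodaira-type vanishing in positive characteristic. The only deviations are cosmetic: you bypass the paper's preliminary reduction to the case $k=H^0(V,\MO_V)$ (carried out there via a separability argument for $k\subset k'$) by applying the no-nontrivial-idempotents criterion directly to the field $k'$, and your extra vanishing $H^0(V,\MO_V(-nW))=0$ upgrades the paper's surjection $H^0(V,\MO_V)\twoheadrightarrow H^0(nW,\MO_{nW})$ to an isomorphism, though surjectivity alone already suffices.
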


\begin{proof}
Set $k':=H^0(V, \MO_V)$. 
Note that $k \subset k'$ is a field extension of finite degree. 
We have natural morphisms: 
\[
\alpha:V \xrightarrow{\alpha'} \Spec\,k' \xrightarrow{\beta} \Spec\,k.
\]
We obtain $\alpha'_*\MO_V=\MO_{\Spec\,k'}$. 

Let us prove that $k \subset k'$ is a separable extension. 
It suffices to prove that $A:=k' \otimes_k \overline k$ is reduced for an algebraic closure $\overline k$ of $k$. 
We have the induced morphism 
\[
\alpha''=\alpha' \times_k \overline k:V \times_k \overline k \to \Spec\,(k' \otimes_k \overline k)=\Spec\,A.
\]
Since $k \to \overline k$ is flat, we have that $\alpha''_*\MO_{V \times_k \overline k}=\MO_{\Spec\,A}$. 
As $V \times_k \overline k$ is reduced, so is $A$. 
Therefore, $k \subset k'$ is a separable extension. 

We have that $\alpha$ is smooth and $\beta$ is \'etale. 
Then it holds that also $\alpha'$ is smooth by \cite[Proposition 2.4.1]{Fu15}. 
Therefore, the problem is reduced to the case where $k=H^0(V, \MO_V)$.

We are allowed to replace $W$ by $nW$ for a positive integer $n$. 
Hence, by Serre duality and the ampleness of $W$, we may assume that 
$H^1(V, \MO_V(-W))=0$. 
Then we obtain a surjective $k$-linear map 
\[
H^0(V, \MO_V) \to H^0(W, \MO_W). 
\]
Since $\dim_k H^0(V, \MO_V)=1$, we get $\dim_k H^0(W, \MO_W)=1$. 
Therefore, $W$ is connected. 
\end{proof}

\begin{lem}\label{l-totaro}
The following hold. 
\begin{enumerate}
\item 
Let $n$ be an integer such that $n \geq 5$. 
If $k$ is an algebraically closed field, 
then there exist a smooth projective surface $V$ over $k$, 
a closed immersion $j:V \hookrightarrow \mathbb P^{n}_k$ over $k$, 
and a nef Cartier divisor $M$ on $V$ which is not semi-ample. 
\item 
Let $n$ be an integer such that $n \geq 11$. 
If $k$ is a field, 
then there exist a smooth projective surface $V$ over $k$, 
a closed immersion $j:V \hookrightarrow \mathbb P^n_k$ over $k$, 
and a nef Cartier divisor $M$ on $V$ which is not semi-ample. 
\end{enumerate}
\end{lem}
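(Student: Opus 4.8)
The plan is to prove Lemma \ref{l-totaro} by first establishing part (1) using a known construction of a surface with a nef-but-not-semi-ample divisor, and then deducing part (2) by a descent/spreading-out argument together with an embedding into $\mathbb P^{11}$. For part (1), I would invoke Totaro's construction (referenced in the sketch of the main theorem): over an algebraically closed field $k$, there exists a smooth projective surface $V$ carrying a nef Cartier divisor $M$ that is not semi-ample. The typical source of such examples is the blowup of $\mathbb P^2$ at suitably chosen points (nine or more points lying on a cubic, with the points in "very general" or arithmetically special position so that the relevant line bundle on the elliptic curve has infinite order in its Picard group), which produces a nef class $M$ on the exceptional side whose base locus is a negative curve obstructing semi-ampleness. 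Granting the existence of $(V, M)$, it remains to produce a closed immersion $j \colon V \hookrightarrow \mathbb P^n_k$ for every $n \geq 5$. Since $V$ is a smooth projective surface, a sufficiently ample line bundle on $V$ embeds it into some projective space; by choosing a very ample divisor whose complete linear system has the right dimension (and, if necessary, composing with a linear projection from a point outside $V$ that remains an embedding once the dimension of the ambient space exceeds $2\dim V = 4$), one obtains an embedding into $\mathbb P^n_k$ for all $n \geq 5$.

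For part (2), the point is to remove the hypothesis that $k$ is algebraically closed while paying the price of raising the dimension bound from $5$ to $11$. The natural strategy is a descent argument: the surface $V$, its embedding, and the divisor $M$ from part (1) are defined by finitely many equations and finitely many coefficients, hence are already defined over a finitely generated subfield, and ultimately (by spreading out) over the prime field or a finite extension thereof. One then wants to descend the data to the given field $k$ itself. The cleanest route is to take the explicit model of Totaro's surface, which can be constructed over the prime field $\mathbb F_p$ (or over $\mathbb Q$, but here we want arbitrary $k$, including finite fields), and then base-change to $k$. The key properties to preserve under base change are: smoothness of $V$ (stable under flat base change), the nef property of $M$ (nefness can be checked after faithfully flat base change, and is preserved), and crucially the failure of semi-ampleness of $M$. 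One must verify that semi-ampleness does not suddenly hold after enlarging the base field, which is where the construction must be arranged carefully so that the obstruction persists.

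The jump from $n \geq 5$ to $n \geq 11$ reflects that, over a non-closed field, guaranteeing a very ample divisor with a complete linear system embedding $V$ into low-dimensional projective space is more delicate: one may need a more positive (higher-degree) divisor to ensure the embedding is defined over $k$ and that the ambient dimension accommodates the relevant Riemann--Roch computation, together with the requirement in Notation \ref{n-1}(v) that $V$ be cut out, set-theoretically and locally scheme-theoretically, by hypersurfaces of a fixed degree $r$. Concretely, I would take a suitable multiple of a very ample divisor on $V$ so that $V$ embeds as a projectively normal (or at least ideal-theoretically well-behaved) subvariety of $\mathbb P^{11}_k$, ensuring enough degree-$r$ hypersurfaces pass through $V$ to satisfy conditions (v-1) and (v-2).

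The main obstacle I anticipate is the descent of the \emph{non-semi-ampleness} of $M$ to an arbitrary field $k$, especially a finite field, since semi-ampleness is not obviously stable under base field change in the direction we need, and Totaro's original construction and its non-semi-ampleness argument must be traced to confirm they survive descent; ensuring the embedding into precisely $\mathbb P^{11}_k$ (rather than some larger space) while retaining all the structural hypotheses of Notation \ref{n-1} is the secondary technical hurdle.
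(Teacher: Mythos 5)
There are two genuine gaps, one in each part. For part (1), the construction you actually sketch---the blowup of $\mathbb P^2$ at nine points on a cubic chosen so that a degree-zero line bundle on the elliptic curve has infinite order---fails in precisely the case this paper exists to handle. Over $k=\overline{\F}_p$ every closed point of the Jacobian of a curve is torsion, so every degree-zero line bundle on an elliptic curve over $\overline{\F}_p$ is torsion, and no choice of nine points, however ``arithmetically special,'' produces the required infinite-order obstruction; this is exactly the dichotomy explained in the paper's introduction. The paper handles $\overline{\F}_p$ by citing Totaro's genuinely different example \cite[Theorem 6.1]{Tot09} as a black box, and for algebraically closed fields not algebraic over a finite field it uses the product $V=E\times C$ of an elliptic curve with a curve, pulling back a non-torsion degree-zero divisor $N$ on $E$. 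Your citation of Totaro would suffice, but the construction you describe as its ``typical source'' cannot substitute for it, and since the finite-field case is the whole point of the theorem, this is not a cosmetic slip.

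For part (2), you correctly identify the two difficulties but resolve neither, and you misattribute the source of the bound $n\geq 11$. The persistence of non-semi-ampleness is settled by faithfully flat descent in the \emph{downward} direction: since $H^0$ commutes with flat base change of the ground field, global generation---hence semi-ampleness---descends along $k_0\to k$; so if $M_0$ over the prime field $k_0$ is not semi-ample, then the pullback of $M_0$ to at least one connected component of $V_0\times_{k_0}k$ is not semi-ample, while nefness ascends by \cite[Lemma 2.3]{Tan}. (The same descent argument justifies the first step, spreading the data from $\overline{k}$ down to a finite extension $k'$ of a perfect field $k$.) As for the number $11$: it has nothing to do with Riemann--Roch, projective normality, or conditions (v-1), (v-2) of Notation \ref{n-1}---those are verified for $r\gg 0$ in the proof of Theorem \ref{t-main} and are not part of this lemma. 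The surface produced over a perfect field $k$ is a priori only defined over a finite \emph{separable} extension $k'$; by the primitive element theorem one gets a closed immersion $\Spec\,k'\hookrightarrow\mathbb P^1_k$, and then
$$V\hookrightarrow \mathbb P^5_{k'}=\mathbb P^5_k\times_k k'\hookrightarrow \mathbb P^5_k\times_k\mathbb P^1_k\hookrightarrow \mathbb P^{11}_k$$
via the Segre embedding, which is where $(5+1)(1+1)-1=11$ comes from (smoothness of $V$ over $k$ holds because $V\to\Spec\,k'\to\Spec\,k$ is a composite of smooth morphisms, $k$ being perfect). Without the descent mechanism and the Segre trick your outline does not close.
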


\begin{proof}
We show (1). 
We may assume that $n=5$. 
The existence of $j$ is automatic, since any smooth projective surface over $k$ 
can be embedded in $\mathbb P^5_k$. 
If $k$ is the algebraic closure of a finite field, 
then the assertion follows from \cite[Theorem 6.1]{Tot09}. 
If $k$ is not algebraic over any finite field, then $V$ can be taken as the direct product of an elliptic curve $E$ and a smooth projective curve. 
Indeed, there is a Cartier divisor $N$ on $E$ such that $\deg N=0$ and $N$ is not torsion, i.e. $rN \not\sim 0$ for any positive integer $r$. 
This implies that $N$ is a nef Cartier divisor which is not semi-ample. 
Hence, its pullback $M$ to $V$ is again a nef Cartier divisor which is not semi-ample.  
This completes the proof of (1). 

We show (2). We may assume that $n=11$. 
First we treat the case where $k$ is a perfect field. 
By (1), we can find a field extension $k \subset k'$ of finite degree, 
a connected $k'$-scheme $V$ of dimension two which is smooth and projective over $k'$, 
a closed immersion $j':V \hookrightarrow \mathbb P^{5}_{k'}$ over $k'$ 
and a nef Cartier divisor $M$ on $V$ which is not semi-ample. 
Automatically $V$ is projective over $k$. 
Since $k$ is perfect, $V$ is also smooth over $k$. 
Thus it suffices to find a closed immersion $j:V \hookrightarrow \mathbb P^{11}_{k}$ over $k$. 
Since $k \subset k'$ is a finite separable extension, it is a simple extension. 
Therefore, there is a closed immersion $i:\Spec\,k' \hookrightarrow \mathbb P^1_k$ over $k$.
We can find a required closed immersion $j$ by using the Segre embedding: 
$$j:V \overset{j'}\hookrightarrow \mathbb P^{5}_{k'} 
= \mathbb P^{5}_{k} \times_k k' \overset{{\rm id} \times i}\hookrightarrow 
\mathbb P^{5}_{k} \times_k \mathbb P^1_k \overset{\text{Segre}}\hookrightarrow \mathbb P^{11}_{k}.$$
This completes the proof of the case where $k$ is a perfect field. 

Second we handle the general case. 
Let $k_0$ be the prime field contained in $k$. 
Since $k_0$ is perfect, 
there exist a smooth projective connected $k_0$-scheme $V_0$ of dimension two, 
a closed immersion $j_0:V_0 \hookrightarrow \mathbb P_{k_0}^{11}$ over $k_0$, 
and a nef Cartier divisor $M_0$ on $V_0$ which is not semi-ample. 
Then $V_0 \times_{k_0} k$ is a scheme which is smooth and projective over $k$. 
Since any ring homomorphism between fields is faithfully flat, 
we can find a connected component $V$ of $V_0 \times_{k_0} k$ such that $M:=(\alpha^*M_0)|_V$ is not semi-ample, where $\alpha:V_0 \times_{k_0} k \to V_0$. 
Since $M_0$ is nef, so is $M$ (cf. \cite[Lemma 2.3]{Tan}).  
Clearly, $V$ is a smooth projective surface over $k$ and 
there is a closed immersion $j:V \hookrightarrow \mathbb P_{k}^{11}$ over $k$. 
This completes the proof of (2). 
\end{proof}

\begin{thm}\label{t-main}
The following hold. 
\begin{enumerate}
\item 
Let $n$ be an integer such that $n \geq 5$. 
If $k$ is an algebraically closed field, 
then there exist a one-dimensional connected closed subscheme $W$ of $\mathbb P^n_k$ 
which is smooth over $k$ and a Cartier divisor $L$ on the blowup $X$ of $\mathbb P^n_k$ along $W$ 
such that $L$ is nef but not semi-ample. 
\item 
Let $n$ be an integer such that $n \geq 11$. 
If $k$ is a field, 
then there exist a one-dimensional connected closed subscheme $W$ of $\mathbb P^n_k$ 
which is smooth over $k$ and a Cartier divisor $L$ on the blowup $X$ of $\mathbb P^n_k$ along $W$ 
such that $L$ is nef but not semi-ample. 
\end{enumerate}
\end{thm}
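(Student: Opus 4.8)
The plan is to reduce Theorem \ref{t-main} to the machinery already assembled in Notation \ref{n-1} and Proposition \ref{p-not-sa}. The theorem asserts the existence of a smooth connected curve $W$ in $\mathbb P^n_k$ and a nef-but-not-semi-ample Cartier divisor $L$ on the blowup; Proposition \ref{p-not-sa}(3) produces exactly such an $L$ once all the data of Notation \ref{n-1} are in place. So the real content is to verify that the hypotheses (i)--(viii) of Notation \ref{n-1} can be satisfied, and that the resulting $W$ is one-dimensional, connected, and smooth. Since the two parts of the theorem differ only in the ambient dimension ($n\geq 5$ over algebraically closed $k$, versus $n\geq 11$ over arbitrary $k$), I would handle them uniformly by invoking Lemma \ref{l-totaro}(1) for part (1) and Lemma \ref{l-totaro}(2) for part (2): in either case we obtain a smooth projective surface $V$, a closed immersion $V\hookrightarrow\mathbb P^n_k$, and a nef non-semi-ample Cartier divisor $M$ on $V$. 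This supplies items (ii), (iii), (iv) of Notation \ref{n-1}.

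Next I would produce the integer $r$ of item (v). Because $V\subset\mathbb P^n_k$ is a smooth (hence locally complete intersection) projective surface, for all sufficiently large $r$ the ideal sheaf $\mathcal I_V(r)$ is globally generated, which gives the set-theoretic statement (v-1), and $V$ is locally cut out near each of its points by $n-\dim V$ of the degree-$r$ hypersurfaces through it, giving the scheme-theoretic local complete intersection statement (v-2); this is a standard consequence of Serre vanishing together with the fact that $V$ is smooth. Fixing such an $r$, I then need $s$ and $t$ as in item (vi): the requirement is a smooth prime divisor $W\sim stH_V-tM$ on $V$ together with the numerical inequality $st>r$ of (vi-1). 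Here I would first arrange that $stH_V-tM$ is very ample on $V$ (possible since $H_V$ is ample and $M$ is nef, so $stH_V-tM=t(sH_V-M)$ is ample for $s\gg 0$, and very ample after possibly increasing $t$), then apply a Bertini-type theorem valid over an arbitrary field to extract a smooth member $W$ of the corresponding linear system. Since $\dim V\geq 2$ and $W$ is an ample effective Cartier divisor on $V$, Lemma \ref{l-ample-connected} guarantees $W$ is connected, and as a divisor on the surface $V$ it is a one-dimensional closed subscheme of $\mathbb P^n_k$. Having $W$ smooth and prime completes item (vi), and items (vii), (viii) are then mere definitions.

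With all the data of Notation \ref{n-1} verified, Proposition \ref{p-not-sa}(3) immediately yields the Cartier divisor $L=(st-r)f^*H+S'$ on the blowup $X$ of $\mathbb P^n_k$ along $W$, which is nef but not semi-ample; and $W$ is the desired one-dimensional connected smooth closed subscheme. I expect the main obstacle to be the simultaneous satisfaction of the smoothness of $W$ and the numerical constraint (vi-1) over an \emph{arbitrary} field, in particular over a finite field, where the classical Bertini smoothness theorem can fail for small linear systems. The resolution is to rely on a Bertini-type statement that holds over any field after passing to a sufficiently high multiple of a very ample system (so that the relevant linear system becomes base-point free and separates tangent directions), which is exactly why the hypothesis is phrased as $t\gg s\gg 0$ in the sketch of the proof; the order of quantifiers matters, since one must choose $s$ large enough to make $sH_V-M$ ample and then $t$ large enough to make $t(sH_V-M)$ very ample with a smooth member, while preserving $st>r$. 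Carrying out this choice of $(s,t)$ compatibly with the fixed $r$, rather than any deep geometry, is where the care lies.
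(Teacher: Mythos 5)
Your overall route is the paper's route: invoke Lemma \ref{l-totaro} to get the Totaro-type surface $V \subset \mathbb P^n_k$ with its nef non-semi-ample $M$, verify Notation \ref{n-1}(v) for $r \gg 0$ from the fact that $V$ is a (locally complete intersection) closed subscheme cut out in degree $r$, produce the smooth ample divisor $W$ for (vi), get connectedness from Lemma \ref{l-ample-connected}, and conclude with Proposition \ref{p-not-sa}(3). The one place you diverge is exactly the crux of the whole paper, and there your argument has a genuine gap: the existence of a smooth member $W \in |t(sH_V - M)|$ when $k$ is a \emph{finite} field. You correctly sense the danger, but the resolution you offer --- pass to a high multiple ``so that the relevant linear system becomes base-point free and separates tangent directions'' --- is not a valid mechanism. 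Base-point freeness and separation of tangent directions are already guaranteed by very ampleness of \emph{any} very ample system, and they do not produce a smooth member over a finite field: the classical Bertini argument selects a general member of the system, and over $\mathbb F_q$ the system has only finitely many members rational over the base field, none of which need lie in the dense open locus of smooth divisors. Indeed there exist smooth projective varieties over $\mathbb F_q$ embedded by a very ample (hence base-point free, tangent-separating) system none of whose $\mathbb F_q$-rational hyperplane sections are smooth, so your stated criterion is simply insufficient.

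The missing ingredient is Poonen's Bertini theorem over finite fields \cite{Poo04}, which is what the paper cites at this step and which is a nontrivial sieve-theoretic result, not a formal consequence of positivity: for a smooth projective subscheme of $\mathbb P^N_{\mathbb F_q}$, the fraction of degree-$d$ hypersurfaces whose intersection with it is smooth tends to a \emph{positive} limit as $d \to \infty$ (this is also why the degree must be allowed to grow --- for fixed $d$, e.g. $d=1$, the statement fails). Concretely, one first chooses $s \gg 0$ so that $sH_V - M$ is ample, re-embeds $V$ by a very ample multiple of it, and then applies \cite[Theorem 1.1]{Poo04} to degree-$d$ hypersurface sections in that embedding; this yields a smooth $W \sim t(sH_V - M) = stH_V - tM$ with $t \gg s \gg 0$ and in particular $st > r$, matching Notation \ref{n-1}(vi). (Over an infinite field the paper instead uses classical Bertini with $t = 1$ and $s > r$, which your sketch also covers.) Without invoking Poonen's theorem or an equivalent finite-field sieve statement, your proof does not establish part (2) for finite $k$ --- which is precisely the new case the theorem is intended to settle; with that citation inserted, the rest of your argument goes through as written.
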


\begin{proof}
We only show (2), as the proof of (1) is easier. 
Fix a field $k$. 
We will find schemes and divisors satisfying the properties of Notation~\ref{n-1}. 
Thanks to Lemma~\ref{l-totaro}, 
there exist a smooth projective connected $k$-scheme $V$ of dimension two, 
a closed immersion $j:V \hookrightarrow \mathbb P^n_k$ over $k$, 
and a nef Cartier divisor $M$ on $V$ which is not semi-ample. 
Set $d:=2$. 
Then $k, V, M, d, n$ satisfy properties (i)-(iv) of Notation~\ref{n-1}. 

Since $V=\Proj\,k[x_0, \cdots, x_n]/(h_1, \cdots, h_a)$, 
it holds that the linear system $\Lambda$ appearing in Notation~\ref{n-1}(v) 
satisfies the property (v-1) of Notation~\ref{n-1} if $r \geq \max_{1 \leq q \leq a} \deg h_q$. 
As $V$ is a locally completion intersection scheme, 
the quasi-compactness of $V$ implies that 
also the property (v-2) of Notation~\ref{n-1} holds for $r \gg 0$. 
Therefore, we can find $r\in \Z_{>0}$ satisfying the property in (v) of Notation~\ref{n-1}.

We now show that there exist $s, t, W$ satisfying the property (vi) of Notation~\ref{n-1}. 
If $k$ is an infinite field, then the Bertini theorem 
enables us to find a positive integer $s$ and a smooth effective divisor $W$ on $V$ such that 
$W \sim sH_V-M$. 
Note that $W$ is connected (Lemma \ref{l-ample-connected}). 
Thus, $s$, $t:=1$ and $W$ satisfy the the property (vi) of Notation~\ref{n-1}. 
If $k$ is a finite field, then it follows from \cite[Theorem 1.1]{Poo04} 
that there are positive integers $t \gg s \gg 0$ and a smooth effective divisor $W$ 
satisfying the the property (vi) of Notation~\ref{n-1}. 
Again by Lemma \ref{l-ample-connected}, $W$ is connected. 
In any case, we can find $s, t, W$ satisfying the property (vi) of Notation~\ref{n-1}. 

To summarise, we have found $V, W, M, d, n, r, s, t$ over a field $k$ 
satisfying the properties (i)-(viii) of Notation~\ref{n-1}. 
By construction, $V$ is a smooth projective surface. 
In particular, $W$ is a smooth projective curve in $\mathbb P^{11}_k$. 
Thanks to Proposition~\ref{p-not-sa}, 
the Cartier divisor 
$$L=(st-r)f^*H+S'$$
on $X$, defined in (viii) of Notation~\ref{n-1}, is nef but not semi-ample.  
\end{proof}

\begin{thm}\label{t-main2}
The following hold. 
\begin{enumerate}
\item 
Let $q$ be an integer such that $q \geq 6$. 
If $k$ is an algebraically closed field, 
then there exists a homogeneous prime ideal $\p$ of the polynomial ring $k[x_1, \cdots, x_q]$ 
with $q$ variables whose symbolic Rees algebra $\bigoplus_{m=0}^{\infty} \p^{(m)}$ is not a noetherian ring. 
\item 
Let $q$ be an integer such that $q \geq 12$. 
If $k$ is a field, 
then there exists a homogeneous prime ideal $\p$ of the polynomial ring $k[x_1, \cdots, x_q]$ 
with $q$ variables whose symbolic Rees algebra $\bigoplus_{m=0}^{\infty} \p^{(m)}$ is not a noetherian ring. 
\end{enumerate}
\end{thm}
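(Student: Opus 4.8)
The plan is to deduce Theorem~\ref{t-main2} as a formal consequence of Theorem~\ref{t-main} together with Proposition~\ref{p-double-gr}, which already translates the noetherianity of a symbolic Rees algebra into that of the ring $R(X;D,-E)$ attached to the blowup. Since the two parts are proved identically (part (1) using the case $n\geq 5$ of Theorem~\ref{t-main}, part (2) using the case $n\geq 11$), I would write out part (2) and remark that (1) is the same. First I would reduce to the minimal number of variables: it suffices to handle $q=12$, since a prime ideal in $k[x_1,\dots,x_{12}]$ extends to a prime ideal in $k[x_1,\dots,x_q]$ for $q\geq 12$ whose symbolic powers, and hence symbolic Rees algebra, are obtained by base change along the faithfully flat inclusion, so non-noetherianity is preserved. (Concretely, $\p A_q$ remains prime and $(\p A_q)^{(m)}=\p^{(m)}A_q$, so $\bigoplus_m (\p A_q)^{(m)}=\big(\bigoplus_m \p^{(m)}\big)\otimes_{k[x_1,\dots,x_{12}]}A_q$ is non-noetherian whenever the original ring is.)

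Next, setting $n=11$ so that $q=n+1=12$, I would invoke Theorem~\ref{t-main}(2) to obtain a one-dimensional connected closed subscheme $W\subset\mathbb P^{11}_k$ that is smooth over $k$, together with the blowup $f\colon X\to\mathbb P^{11}_k$ along $W$ and a Cartier divisor $L$ on $X$ that is nef but \emph{not} semi-ample. The key point is that this nef non-semi-ample divisor obstructs noetherianity: by Proposition~\ref{p-double-gr} (whose hypothesis that $W$ is smooth over $k$ is exactly what Theorem~\ref{t-main} provides), the three conditions that $R(X;D,-E)$ be noetherian, that $\bigoplus_m\p^{(m)}$ be noetherian, and that $\mathrm{Cox}(X)$ be noetherian are all equivalent, where $\p$ is the homogeneous prime ideal of $k[x_0,\dots,x_{11}]$ cutting out $W$.

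The crux is then to rule out condition (3) of Proposition~\ref{p-double-gr} using the divisor $L$. I would argue by contradiction: if $\mathrm{Cox}(X)$ were noetherian, then the hypotheses of Lemma~\ref{l-MDS} would be in force (here I would note, as in the proof of Proposition~\ref{p-double-gr}, that the blowup of $\mathbb P^{11}_k$ along a smooth $k$-subscheme is projective, normal, $\mathbb Q$-factorial with finitely generated free class group, geometrically integral and geometrically normal, with $\mathbf{Pic}^0_X$ of dimension zero), and Lemma~\ref{l-MDS}(2) would force the nef Cartier divisor $L$ to be semi-ample. This contradicts Theorem~\ref{t-main}(2). Hence $\mathrm{Cox}(X)$ is not noetherian, so by Proposition~\ref{p-double-gr} the symbolic Rees algebra $\bigoplus_m\p^{(m)}$ is not noetherian either, which is the assertion for $q=12$ and, after the base-change reduction of the first step, for all $q\geq 12$.

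The main obstacle is not any single hard estimate — the heavy lifting (constructing $W$ and the nef non-semi-ample $L$) is already done in Theorem~\ref{t-main}, and the dictionary is already done in Proposition~\ref{p-double-gr}. Rather, the only thing requiring care is the verification that the geometric hypotheses (a)–(c) of Lemma~\ref{l-MDS} genuinely hold for $X$, and that the base-change step in the reduction to $q=12$ preserves primeness and commutes with forming symbolic powers. Both are routine but must be checked so that the chain ``$\mathrm{Cox}(X)$ noetherian $\Rightarrow$ $L$ semi-ample'' and the propagation from $q=12$ to general $q$ are airtight.
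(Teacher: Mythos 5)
Your proposal is correct and follows essentially the same route as the paper, whose proof of Theorem~\ref{t-main2} is precisely the chain you spell out: Theorem~\ref{t-main} produces the connected smooth curve $W$ (hence an integral subscheme fitting Notation~\ref{n-sra}) and the nef non-semi-ample $L$, Lemma~\ref{l-MDS}(2) rules out noetherianity of ${\rm Cox}(X)$, and Proposition~\ref{p-double-gr} transfers this to the symbolic Rees algebra. Your preliminary base-change reduction to $q=12$ is correct but unnecessary, since Theorem~\ref{t-main}(2) already holds for every $n\geq 11$ and one may simply take $n=q-1$.
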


\begin{proof}
The assertion follows from Lemma \ref{l-MDS}, Proposition~\ref{p-double-gr} and Theorem~\ref{t-main}. 
\end{proof}

\end{document}